\newtheorem{theorem}{Theorem}[section]
\newtheorem{proposition}[theorem]{Proposition}
\newtheorem{lemma}[theorem]{Lemma}
\newtheorem{proof}{\textmd{\textit{Proof.}}}
\newtheorem{remark}[theorem]{Remark}
\newcommand{\qedd}{\hfill \Box}
\newcommand{\R}{\ensuremath{\mathbb{R}}}
\newcommand{\Sph}{\ensuremath{\mathbb{S}}}
\def\g{\gamma}
\def\e{\varepsilon}
\title{Topology of complete Finsler manifolds admitting convex functions
\footnote{
Mathematics Subject Classification (2010)\,:\,53C60, 53C22.}
\footnote{
Keywords: 
}
}
\author{Sorin V. SABAU, Katsuhiro SHIOHAMA}
\date{}
\begin{document}


\maketitle

\section{Introduction.}
Let $(M,F)$ be an $n$-dimensional Finsler manifold. The well known Hopf-Rinow theorem (see for example \cite{BCS}) states that $M$ is complete if and only if the exponential map $\exp_p$ at some point $p\in M$ (and hence for every point on $M$) is defined on the whole tangent space $T_pM$ to $M$ at that point. This is equivalent to state that $(M,F)$ is geodesically complete with respect to forward geodesics at every point on $M$. Throughout this article we assume that {\it $(M,F)$ is geodesically complete with respect to forward geodesics}. 

A function $\varphi:(M,F)\to\R$ is said to be {\it convex} if and only if along every geodesic (forward and backward) $\g:[a,b]\to (M,F)$, the restriction $\varphi\circ\g:[a,b]\to\R$ is convex:  
   \begin{equation}\label{eq:convex}
       \varphi\circ\g((1-\lambda)a+\lambda\ b)\le(1-\lambda)\varphi\circ\g(a)+\lambda\varphi\circ\g(b),\quad 0\le\lambda\le 1.   
    \end{equation}   
    
If the inequality in the above relation is strict for all $\g$ and for all $\lambda\in(0,1)$, then $\varphi$ is called {\it strictly convex}. If the second order difference quotient, namely the quantity 
$\{\varphi\circ\g(h)-\varphi\circ\g(-h)-2\varphi\circ\g(0)\}\slash {h^2}$ is bounded away from zero on every compact set on $M$ along all $\g$, then $\varphi$ is called {\it strongly convex}. In the case when $\varphi$ is at least $C^2$, its convexity can be written in terms of the Finslerian Hessian of $\varphi$, but we do not need to do this in the present paper.

 If $\varphi\circ\gamma$ is a convex function of one variable, then the function 
 $\varphi\circ\bar{\gamma}$ is also convex, where $\bar\gamma$ is the reverse curve of $\gamma$. For a general Finsler metric if $\gamma$ is a geodesic it does not mean that the inverse curve $\bar\gamma$ is a geodesic also, but $\varphi\circ\bar\gamma$ is convex and so is 
 $\varphi\circ\gamma$ as well.

Every non-compact manifold admits a complete (Riemannian or Finslerian) metric and a non-trivial smooth function which is convex with respect to this metric (see \cite{GS1}).

 If a non-trivial convex function $\varphi:(M,F)\to\R$ is constant on an open set, then $\varphi$ assumes its minimum on this open set and the number of components of {\it a level set} $M^a_a(\varphi):=\varphi^{-1}(\{a\})$, $a\geq\inf_M\varphi$ is equal to that of the boundary components of the minimum set of $\varphi$. 
 Here we denote 
 $\inf_M\varphi:=\inf\{\varphi(x):x\in M\}$.
 
 A convex function $\varphi$ is said to be {\it locally non-constant} if it is not constant on any open set of $M$. From now on {\it we always assume that a convex function is locally non-constant}.
\par\medskip

The purpose of this article is to investigate the topology of complete Finsler manifolds admitting (locally non-constant) convex functions 
$\varphi:(M,F)\to\R$. Convex functions on complete Riemannian manifolds
have been fully discussed in~\cite{GS1} and others. Although the distance function on $(M,F)$ is not symmetric and the backward geodesics do not necessarily concide with the forward geodesics, we prove that most of the Riemannian results obtained in~\cite{GS1} have the Finsler extensions, as stated below.\par

 
 We first discuss the topology of the Finsler manifold $(M,F)$ admitting a convex function $\varphi$.

 \begin{theorem}[compare Theorem F,~\cite{GS1}]\label{Th:homeomorphism}
Let $\varphi:(M,F)\to\R$ be a convex function. Assume that all of the levels of $\varphi$ are compact.

If $\inf_M\varphi$ is not attained, then there exists a homeomorphism 
  \[    H:M^a_a(\varphi)\times(\inf_M\varphi,\infty)\to M,      \]
for an arbitrary fixed number $a \in (\inf_M\varphi , \infty)$,
such that 
   \begin{equation*}
\varphi(H(y,t))=t,\quad\forall y\in M^a_a(\varphi),\quad\forall t\in(\inf_M\varphi,\infty).     
   \end{equation*}
   
 Moreover, if $\lambda := \inf_M \varphi$ is attained, then $M$ is homeomorphic to the
normal bundle over $M_\lambda^\lambda (\varphi)$ in $M$.
 
 \end{theorem}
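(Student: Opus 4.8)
The plan is to study $\varphi$ through its sublevel sets and to carry level sets into one another by a gradient-like flow. First I would record the basic structure. Since $\varphi$ is convex and locally non-constant and every level is compact, each sublevel set $\{x:\varphi(x)\le t\}$ is a compact totally convex set whose topological boundary, for $t>\inf_M\varphi$, is exactly the level $M^t_t(\varphi)$, and $M$ is exhausted by these sets as $t\to\infty$. In particular each level above the infimum is a compact topological hypersurface, and I would first establish the key claim that all such levels are mutually homeomorphic. This claim is the engine of both conclusions.

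To produce these homeomorphisms I would construct a continuous gradient-like flow $\Phi_s$ adapted to $\varphi$, namely one along which $\varphi$ increases at unit rate. Because $\varphi$ is only Lipschitz, the Finsler gradient $\cL^{-1}(d\varphi)$ is not available pointwise; instead I would use the one-sided directional derivatives of the convex function (equivalently, its subdifferential) to select at each point of a level $>\inf_M\varphi$ the forward geodesic direction of steepest descent. Forward completeness of $(M,F)$ together with compactness of the levels guarantees that the resulting integral curves exist and remain transverse to the levels, and a continuity-and-uniqueness argument — or, alternatively, a smoothing of $\varphi$ preserving its convexity and its level structure — shows that the time-$s$ map $\Phi_s$ is a homeomorphism carrying $M^a_a(\varphi)$ onto $M^{a+s}_{a+s}(\varphi)$ whenever both values exceed $\inf_M\varphi$. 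Setting $H(y,t):=\Phi_{t-a}(y)$ then yields the asserted homeomorphism with $\varphi\circ H(y,t)=t$, provided the descent never terminates. This is precisely where the hypothesis that $\inf_M\varphi$ is not attained enters: no descent curve can reach a minimum, so each is defined for all parameter values down to, but not including, $\inf_M\varphi$, and the flow sweeps out all of $M$, giving the product $M^a_a(\varphi)\times(\inf_M\varphi,\infty)$.

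When $\lambda:=\inf_M\varphi$ is attained I would instead analyze the minimum set $N:=M^\lambda_\lambda(\varphi)$. By convexity $N$ is totally convex, and by the structure theory of totally convex sets it is a compact totally geodesic topological submanifold (possibly with boundary, the latter occurring genuinely, e.g. when $N$ is a minimizing-distance set to a convex arc). Each downward gradient-like curve now terminates at a well-defined foot point on $N$, and these curves leave $N$ in directions normal to it, so I would identify $M$ with the normal bundle of $N$ via the normal exponential map $\exp^{\perp}$, with the normal fibers replaced by normal cones at any boundary points of $N$. Surjectivity holds because the compact totally convex set $N$ is joined to every point of $M$ by a minimizing forward geodesic meeting $N$ orthogonally; injectivity and the homeomorphism property follow from the strict monotonicity of $\varphi$ along geodesics issuing from $N$, together with invariance of domain and the properness supplied by compactness of the levels.

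The main obstacle I expect is the construction and regularity of the gradient-like flow. Since $\varphi$ is generally not differentiable, one cannot simply integrate a smooth gradient field, and the non-reversibility of $F$ forces the ascent and descent directions to be treated separately, with forward completeness invoked to keep the flow defined. Upgrading the time-$s$ maps from continuous surjections to genuine homeomorphisms, and verifying that $\exp^{\perp}$ is a homeomorphism onto all of $M$ in the attained case, are the delicate points; it is the compactness of the levels that ultimately tames them.
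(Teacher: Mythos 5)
Your strategy is genuinely different from the paper's, and its central mechanism is asserted rather than constructed: everything rests on the existence of a continuous gradient-like flow $\Phi_s$ for a convex function that is only locally Lipschitz (Proposition \ref{Prop:Lipschitz}), on a Finsler manifold whose metric is non-reversible. Neither of your two suggested realizations closes this. A selection of ``steepest descent'' directions from the one-sided derivatives of a nonsmooth convex function is in general a discontinuous, set-valued field; its integral curves need not exist classically, need not be unique in the ascent direction, and need not depend continuously on the initial point, so the time-$s$ maps cannot be upgraded to homeomorphisms by a generic ``continuity-and-uniqueness argument.'' The alternative --- smoothing $\varphi$ while preserving convexity and the level structure --- is itself a hard theorem (it is the subject of a separate Greene--Shiohama paper in the Riemannian case) and is not available off the shelf here. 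You also need, and do not supply, a uniform positive lower bound on the rate of decrease of $\varphi$ along your descent curves over each compact slab $\varphi^{-1}[a,b]$; without it the reparametrization by $\varphi$-value is not even well defined, and the curves could stall before reaching lower levels. Finally, Remark \ref{Rem:important} explains that the Riemannian device of controlling such curves by the monotonicity of $t\mapsto d(p,\cdot)$ fails for Finsler metrics, which is exactly why a naive transplant of the gradient-flow picture is dangerous here.

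The paper avoids all of this with a discrete construction (Proposition \ref{Prop:homeomorphism}): choose interlaced subdivisions $a=a_0<b_0<a_1<\cdots<a_k=b<b_k$ inside a compact set $K$, spaced within the convexity radius $r(K)$ so that sublevel sets are convex in each ball and feet of perpendiculars are unique; from each point of $\varphi^{-1}(\{a_{j+1}\})$ drop the unique minimizing geodesic to its foot on $\varphi^{-1}(-\infty,a_{j-1}]$; the slope inequality for the one-variable convex functions $\varphi\circ T(p_j',q_j)$ yields a uniform bound $-\Delta_a^b(K)<0$ on all slopes, so the concatenated broken geodesics $T(p_k)$ cross each intermediate level exactly once and simply cover $\varphi^{-1}[a,b]$, giving $\Phi_a^b(x,t)=T(x)\cap\varphi^{-1}(\{t\})$. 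The global homeomorphism is then a composition over a bi-infinite sequence of values tending to $\inf_M\varphi$ and to $\infty$. Your treatment of the attained case (normal exponential map onto the totally geodesic, totally convex minimum set, with compactness of levels supplying properness) is close in spirit to the paper's tubular-neighborhood argument and is essentially fine once the product structure away from the minimum set is secured; the gap is entirely in the flow.
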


Next, we discuss the case where $\varphi$ has a disconnected level.

\begin {theorem}[compare Theorem A,~\cite{GS1}]\label{Th:disconnected} 
Let $\varphi:(M,F)\to\R$ be a convex function. If $M^c_c(\varphi)$ is disconnected for some $c\in\varphi(M)$, we then have
\begin{enumerate}[\rm(1)]
\item  $\inf_M\varphi$ is attained.
\item  If $\lambda:=\inf_M\varphi$, then $M^\lambda_\lambda(\varphi)$ is a totally geodesic smooth hypersurface which is totally convex without boundary. 
\item The normal bundle of $M^\lambda_\lambda(\varphi)$ in $M$ is trivial.
\item  If $b>\lambda$, then the boundary of the $b$-sublevel set 
$M^b(\varphi):=\{x\in M \,|\,\varphi(x)\le b\}$ has exactly two components.
\end{enumerate}
\end{theorem}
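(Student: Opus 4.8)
The plan is to show that all four statements flow from one principle: a locally non-constant convex function has no critical points except global minima, so its sole critical value is $\lambda:=\inf_M\varphi$, and every change in the topology of the sublevel sets is concentrated at $\lambda$. I shall use two facts throughout. First, each sublevel set $M^b(\varphi)$ is totally convex, and in a connected forward-complete Finsler manifold a totally convex set is connected, because any two of its points are joined by a minimal forward geodesic along which $\varphi$ is convex and hence stays $\le b$; in particular the minimum set $M^\lambda_\lambda(\varphi)=\{\varphi=\lambda\}$, being a sublevel set, is totally convex and connected. Second, if $d\varphi_x=0$ then $\varphi$ is convex with vanishing derivative along every geodesic issuing from $x$, so $x$ minimizes $\varphi$ along each of them and is therefore a global minimum; thus the critical set of $\varphi$ is exactly $M^\lambda_\lambda(\varphi)$. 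Note that the disconnected level must satisfy $c>\lambda$, since $M^\lambda_\lambda(\varphi)$ is connected.

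Next I would construct the descent structure. Because $\varphi$ has no critical point off $M^\lambda_\lambda(\varphi)$, a pseudo-gradient flow decreasing $\varphi$ yields, for $\lambda<b_1<b_2$, a $\varphi$-preserving homeomorphism of the shell $\{b_1\le\varphi\le b_2\}$ onto $M^{b_2}_{b_2}(\varphi)\times[b_1,b_2]$; consequently the levels $M^b_b(\varphi)$ are mutually homeomorphic for all $b>\lambda$, and $M$ is homeomorphic to the (generalized) normal bundle of $M^\lambda_\lambda(\varphi)$ with each $M^b_b(\varphi)$ corresponding to its unit normal set. This is the exact analogue of the retraction underlying Theorem~\ref{Th:homeomorphism}, and I expect it to be \textbf{the main obstacle}: here the levels are not assumed compact, so one must show the flow is defined on long enough time intervals and that the region lying between the two components of $M^c_c(\varphi)$ does not escape to infinity, which is exactly where the convexity of $\varphi$ has to be used to confine it.

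For assertion (1), pick $p$ and $q$ in distinct components $A$ and $B$ of $M^c_c(\varphi)$ and a minimal geodesic $\gamma$ from $p$ to $q$; then $\varphi\circ\gamma$ is convex, equals $c$ at both endpoints and is not identically $c$ (otherwise $\gamma$ would connect $A$ to $B$ inside the level), so it dips strictly below $c$ and the two components sit on genuinely different sides of a valley. By the previous paragraph all levels $M^b_b(\varphi)$ with $b>\lambda$ are homeomorphic, hence all disconnected. As $b$ decreases to $\lambda$ these at-least-two components must coalesce, and coalescence can occur only across a critical value; since $\lambda$ is the only critical value, the two families meet at points where $\varphi=\lambda$, so $\lambda$ is attained. (If $\lambda$ were not attained, the flow would continue below $\lambda$ without ever merging the components, contradicting the confinement just described.)

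It remains to read off (2)--(4) from the normal bundle picture. The minimum set $M^\lambda_\lambda(\varphi)$ is a connected totally convex set, hence a connected totally geodesic submanifold, a priori with boundary, of dimension $k$; here $k\le n-1$, since $k=n$ would give an open set on which $\varphi\equiv\lambda$, against local non-constancy. Now the disconnectedness of $M^c_c(\varphi)$ excludes both remaining degeneracies. If $M^\lambda_\lambda(\varphi)$ had a boundary point, the nearby level sets would connect around it through the outward normal directions, forcing $M^b_b(\varphi)$ to be connected; and if $k\le n-2$, the unit normal fibers would be connected spheres of positive dimension over the connected base, again forcing $M^b_b(\varphi)$ to be connected. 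Both contradict disconnectedness, so $M^\lambda_\lambda(\varphi)$ is a boundaryless, smooth, totally geodesic hypersurface, which is (2). Its normal bundle is then a real line bundle whose unit normal set is the disconnected level $M^c_c(\varphi)$; a line bundle over a connected base whose associated $0$-sphere bundle is disconnected is orientable, hence trivial, which is (3). Finally, for each $b>\lambda$ the boundary $\partial M^b(\varphi)=M^b_b(\varphi)$ is homeomorphic to this $0$-sphere bundle, namely two disjoint copies of $M^\lambda_\lambda(\varphi)$, so it has exactly two components, which is (4).
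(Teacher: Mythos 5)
Your proposal routes every step through a ``descent structure'': a $\varphi$-preserving homeomorphism of each shell $\{b_1\le\varphi\le b_2\}$ onto $M^{b_2}_{b_2}(\varphi)\times[b_1,b_2]$, obtained from a pseudo-gradient flow, together with the identification of $M$ with the normal bundle of the minimum set and of each level with its unit-normal (double-cover) set. This is a genuine gap, for two reasons. First, $\varphi$ is only convex, hence locally Lipschitz (Proposition \ref{Prop:Lipschitz}); it need not be $C^1$, so there is no gradient or pseudo-gradient to flow along, and the paper's substitute (the broken-geodesic construction of Proposition \ref{Prop:homeomorphism}) is proved only for \emph{compact} connected levels. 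Theorem \ref{Th:disconnected} makes no compactness assumption, and Remark \ref{Rem:important} states explicitly that for a Finsler metric with non-compact levels the level-to-level correspondence $x\mapsto x_0$ may fail, precisely because one cannot confine the descending broken geodesics to a fixed compact set. You flag this confinement problem yourself as ``the main obstacle'' but then use the product structure in every subsequent step without resolving it. Second, even granting homeomorphic disconnected levels for all $b>\lambda$, your argument for (1) --- ``coalescence can occur only across a critical value, hence $\lambda$ is attained'' --- is a Morse-theoretic heuristic, not a proof: nothing you have established forces the components to coalesce at all if the infimum is not attained, and the parenthetical appeal to ``the confinement just described'' is circular since that confinement was never proved.

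The paper's proof avoids global flows entirely and works locally. For (1) it takes a minimizing geodesic $\sigma$ between the two components of $M^c_c(\varphi)$, assumes its minimum value $b$ exceeds $\inf_M\varphi$, drops a minimizing geodesic $\alpha$ from the lowest point $q=\sigma(\ell_0)$ to a lower level $M^a_a(\varphi)$, and then joins each point $\alpha(t)$ to $q_\pm=\sigma(\ell_0\pm r)$ inside a convexity ball; the slope inequality forces each of these geodesics, extended, to cross $M^c_c(\varphi)$ at a controlled parameter, sweeping out continuous curves $C_0^\pm$ and $C_1$ \emph{inside} $M^c_c(\varphi)$ that connect $\sigma(0)$ to $\sigma(\ell)$ --- contradicting disconnectedness. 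Hence $\inf_M\varphi$ is achieved on the compact set $\sigma([0,\ell])$. Parts (2) and (4) reuse the same sweeping technique: if $\dim M^\lambda_\lambda(\varphi)\le n-2$, or if it had boundary, or if some level had three components, then $\Sigma_p\setminus\Sigma_p(M^\lambda_\lambda(\varphi))$ would contain an arc joining the initial directions of geodesics to points in distinct components of $M^c_c(\varphi)$, and the induced one-parameter family of geodesics again produces a connecting curve inside the level. Your statements of what (2)--(4) should say are correct, and your line-bundle argument for (3) is essentially the paper's, but to make the whole proof work you would need to replace the flow-based machinery with local constructions of this kind.
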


The diameter function $\delta:\varphi(M)\to\R_+$ plays an important role in this article and it is defined as follows:
     \begin{equation}\label{eq:diameterfunction}
        \delta(t):=\sup\{d(x,y) | x,y\in M_t^t(\varphi)\}.   
     \end{equation}   
     
It is  known from \cite{GS1} that the diameter function $\delta$ of a complete Riemannian manifold admitting a convex function is monotone non-decreasing. However it is not certain if it is monotone on a Finsler manifold. In Theorem \ref{Th:homeomorphism}, we do not use the monotone property but only 
the local Lipschitz property of $\delta$ which is proved in Proposition \ref{Prop:diameter}.

\bigskip

We finally discuss the number of ends of a Finsler manifold $(M,F)$ admitting a convex function $\varphi$. As stated above, the diameter function $\delta$, defined on the image of the convex function $\varphi$, may not be monotone. It might occur that a convex function defined on a Finsler manifold $(M,F)$ may simultaneously admit both compact and non-compact levels. This fact makes difficult to discuss the number of ends of the manifold $(M,F)$. However, we shall discuss all the possible cases and prove:

\begin{theorem}[compare Theorems C, D and G,~\cite{GS1}]\label{Th:ends}
Let $\varphi:(M,F)\to\R$ be a convex function. 
\begin{enumerate}
\renewcommand{\labelenumi}{\rm \Alph{enumi}.}
\item Assume that $\varphi$ admits a disconnected level. 
\begin{enumerate}[\rm ({A}1)]
\item  If all the level of $\varphi$ are compact, then $M$ has two ends.
\item If all the levels of $\varphi$ are non-compact, then $M$ has one end.
\item If both compact and non-compact levels of $\varphi$ exist 
simultaneously, then $M$ has at least three ends.
\end{enumerate}
\item Assume that all the levels of $\varphi$ are connected and compact. 
\begin{enumerate}[\rm ({B}1)]
\item If $\inf_M\varphi$ is attained, then $M$ has one end.
\item If $\inf_M \varphi$ is not attained, then $M$ has two ends.
\end{enumerate}
\item If all the levels are connected and non-compact, then $M$ has one
end.
\item Assume that all the levels of $\varphi$ are connected and that $\varphi$ admits
both compact and non-compact levels simultaneously. Then we have:
\begin{enumerate}[\rm ({D}1)]
\item If $\inf_M \varphi$ is not attained, then $M$ has two ends.
\item If $\inf_M \varphi$ is attained, then $M$ has at least two ends.
\end{enumerate}
\item Finally, if $M$ has two ends, then all the levels of $\varphi$ are compact.
\end{enumerate}

\end{theorem}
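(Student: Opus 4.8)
The plan is to compute the number of ends by exhausting $M$ with the totally convex sublevel sets $M^{b}(\varphi)$ and tracking the unbounded components of their complements. I would isolate the two possible sources of ends: the \emph{upper} ends, carried by the unbounded components of the superlevel sets $\{\varphi\ge b\}$ as $b\to\infty$, and a single possible \emph{lower} end, present only when $\inf_M\varphi$ is not attained and carried by $\{\varphi\le a\}$ as $a\downarrow\inf_M\varphi$. The bridge between this count and the hypotheses is the diameter function $\delta$ of \eqref{eq:diameterfunction}: $\delta(t)<\infty$ holds exactly when the level $M_t^t(\varphi)$ is compact, and its local Lipschitz continuity (Proposition \ref{Prop:diameter}) lets me decide, on each subinterval of $\varphi(M)$, whether the levels stay compact, stay non-compact, or change type. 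Combined with the product/bundle descriptions of Theorem \ref{Th:homeomorphism} and the splitting furnished by Theorem \ref{Th:disconnected}, this reduces each case to a bookkeeping of components.

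First I would dispose of the cases with a disconnected level (part A). Here Theorem \ref{Th:disconnected} does most of the work: $\inf_M\varphi$ is attained, the minimum set $\Sigma:=M_\lambda^\lambda(\varphi)$ is a closed totally geodesic hypersurface with trivial normal bundle, and $\partial M^b(\varphi)$ has exactly two components for every $b>\lambda$; thus $\Sigma$ separates $M$ into two sides on each of which $\varphi$ increases. If every level is compact (A1), each side is an increasing union of compact levels and contributes one end, giving two ends. If instead every level is non-compact (A2), then $\Sigma$ itself is non-compact and the two sides are joined at infinity along $\Sigma$, collapsing the count to a single end (the model being $\varphi(x,y)=x^2$ on $\R^2$). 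The mixed case (A3) is delicate: on at least one side the levels change from compact to non-compact, and I would use Proposition \ref{Prop:diameter} to locate a value where a compact level opens up; the two boundary components of $M^b(\varphi)$ together with the newly non-compact direction then force at least three unbounded components.

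For connected levels (parts B, C, D) I would read the topology off Theorem \ref{Th:homeomorphism} whenever the levels are all compact: if $\inf_M\varphi$ is attained, $M$ is the normal bundle over the connected minimum set, whose sphere bundle is connected, giving one end (B1); if it is not attained, $M\cong M_a^a(\varphi)\times(\inf_M\varphi,\infty)$ with compact connected fibre, giving two ends (B2). When all levels are non-compact and connected (C), the same joining-at-infinity phenomenon as in A2 yields a single end. The genuinely new Finsler phenomenon is D, where compact and non-compact connected levels coexist precisely because $\delta$ need not be monotone; here I would again invoke the local Lipschitz bound on $\delta$ to isolate the transition values and to build an explicit exhaustion, reading off the claimed counts (two ends when $\inf_M\varphi$ is not attained, at least two when it is).

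Finally, part E is the converse, which I would prove by contraposition: if some level is non-compact, then $M$ falls under A2, A3, C or D. In A2 and C the two candidate $\varphi$-ends merge into one, and in A3 a third is spawned, so neither yields exactly two; the only remaining possibility is the connected mixed case D, and this is the crux. There I would argue that a non-compact connected level, being unbounded as a hypersurface, either drags any would-be pair of $\varphi$-ends together or produces an additional one, so that the total is again $1$ or $\ge 3$ and never exactly $2$; hence two ends forces every level to be compact. Reconciling this conclusion with the component bookkeeping of part D, under the sole control of the local Lipschitz estimate of Proposition \ref{Prop:diameter} rather than the monotonicity of $\delta$ available in the Riemannian setting of \cite{GS1}, is the principal obstacle of the entire theorem.
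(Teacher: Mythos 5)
Your case division follows the statement of the theorem and your treatment of (A1) and (B) matches the paper's (exhaust $M$ by the compact sets $\varphi^{-1}[\lambda,b_j]$ or $\varphi^{-1}[a_j,b_j]$ and count unbounded complementary components). But the parts you describe as ``joining at infinity'' --- (A2), (C), (D1), and the crux of (E) --- are precisely where the paper has to work, and your proposal supplies no mechanism for them. The paper's engine is the following: assume there are too many ends, pick a divergent sequence $\{q_j\}$ in one unbounded component and a fixed point $p$ in another, take minimizing geodesics $\gamma_j$ from $p$ to $q_j$, and pass to a limit ray (or, for (D1), a straight line through points of $K$) along which $\varphi$ is bounded above; convexity then forces $\varphi\circ\gamma$ to be non-increasing, hence constant, which contradicts the values $\varphi$ must take on the compact set $K$ that every $\gamma_j$ crosses. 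Nothing in your sketch produces the curves that would put two far-out regions in the same complementary component, nor the contradiction that rules out a third end; ``the two sides are joined at infinity along $\Sigma$'' is an assertion of the conclusion, not an argument. A second concrete problem is your reliance on Proposition \ref{Prop:diameter}: that proposition is proved only under the hypothesis that \emph{all} levels are compact, so you cannot apply it on an interval of values where the levels ``change type,'' and indeed it plays no role in the paper's proof of this theorem. The structural fact you actually need --- that compactness of levels propagates upward and non-compactness downward, so the compact levels form an upper ray in $\varphi(M)$ --- is Lemma \ref{Lem:compactlevel}, which you never invoke; it, together with Theorem \ref{Th:disconnected}(4), is what makes (A3) a short argument ($\varphi^{-1}[b,\infty)$ contributes two ends from the two components of a compact disconnected level, and $\varphi^{-1}[\lambda,b]$ is non-compact because the minimum set is, giving a third).

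Finally, your plan for (E) is both incomplete and internally inconsistent with the theorem. You propose to show that in the connected mixed case the number of ends is ``$1$ or $\ge 3$ and never exactly $2$,'' but part (D1) of the very statement asserts that this case (with $\inf_M\varphi$ not attained) yields exactly two ends, so your intended conclusion cannot be right as stated; and you acknowledge you do not know how to close this case. The paper's proof of (E) instead disposes of the non-compact-level scenarios by citing (A2)/(C) for one end and the (A3)-type count for at least three ends in the disconnected mixed case. So the proposal, as it stands, is a plausible outline of the bookkeeping but omits the limit-geodesic construction that constitutes essentially all of the actual proof, and substitutes for Lemma \ref{Lem:compactlevel} a tool (the Lipschitz diameter function) that is unavailable in the generality you need.
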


\begin{remark}
The supplementary condition that all of the levels of $\varphi$ are simultaneously compact or non-compact in the hypothesis of Theorem \ref{Th:homeomorphism} is necessary because we have not proved that the diameter function $\delta$ is monotone non-decreasing for a Finsler manifold. If this property of monotonicity would hold good, then this assumption can be removed. 
\end{remark}

 \medskip
We summarize the historical background of convex and related functions on manifolds, G-spaces and Alexandrov spaces. Locally non-constant convex functions, affine functions and peakless functions have been investigated on complete Riemannian manifolds and complete non-compact Busemann G-spaces and Alexandrov spaces in various ways. The topology of Riemannian manifolds $(N,g)$ admitting locally nonconstant convex functions, have been investigated in \cite{GS1}, \cite{Bangert}, \cite{GS2}, \cite{GS3}. The topology of Busemann G-surfaces admitting convex functions has been investigated in~\cite{Innami2} and in \cite{M1}. It should be noted that convex functions on complete Alexandrov surfaces are {\it not continous}. The notion of peakless functions introduced by Busemann~\cite{Busemann} is similar to quasiconvex functions and weaker than convex functions, and has been discussed in~\cite{Busemann-Phadke} and ~\cite{Innami3}. The topology of complete manifolds admitting locally geodesically (strictly) quasiconvex and uniformly locally convex filtrations have been investigated by Yamaguchi~\cite{Yamaguchi1},\cite{Yamaguchi2}
and \cite{Yamaguchi3}. The isometry groups of complete Riemannian manifolds $(N,g)$ admitting strictly convex functions have been discussed in \cite{Yamaguchi-isometry} and others. A well known classical theorem due to Cartan states that every compact isometry group on an Hadamard manifold $H$ has an fixed point. This follows from a simple fact that the distance function to every point on $H$ is strictly convex. Peakless functions and totally geodesic filtration on complete manifolds have been discussed in \cite{Innami3}, \cite{Busemann-Phadke}, \cite{Yamaguchi1}, \cite{Yamaguchi2}, \cite{Yamaguchi3} and others. 

A convex function on $(N,g)$ is said to be {\it affine} if and only if the equality in (\ref{eq:convex}) holds for all $\g$ and for all $\lambda\in(0,1)$. The splitting theorem for Riemannian manifolds have been investigated in ~\cite{Innami1}. Alexandrov spaces admitting affine functions have been established in \cite{Innami1}, \cite{M2} and~\cite{M3}. An overview on the convexity of Riemannian manifolds can be found in \cite{BZ}. \par
 The properties of isometry groups on Finsler manifolds admitting convex functions will be discussed separately. We refer the basic facts in Finsler and Riemannian geometry to \cite{BCS}, 
 \cite{CCL}, \cite{CE}, \cite{Sakai}. \par


  \section{Fundamental facts}
 
 The fundamental facts on convex sets and convex functions on $(M, F)$
are summarized as follows. Most of these are trivial in the Riemannian case, 
but we consider useful to
formulate and prove them in the more general Finslerian setting.

Let $(M,F)$ be a complete Finsler manifold. At each point $p\in M$, the 
indicatrix $\Sigma_p\subset T_pM$ at $p$ is defined as 
   \[    \Sigma_p:=\{u\in T_pM | F(p,u)=1\}.     \]
The {\it reversibility function} $\lambda:(M,F)\to\R^+$ of $(M,F)$ is given as
      \[     \lambda(p):=\sup\,\{F(p,-X)\,|\, X\in\Sigma_p\}.      \]
Clearly, $\lambda$ is continuous on $M$ and 
       \[   
    \lambda(p)=\max\{\frac{F(p,-X)}{F(p,X)}\,|\, X\in T_pM \setminus\{0\}\}.
       \]
Let $C\subset M$ be a compact set. There exists a constant $\lambda(C)>0$ depending on $C$ such that if $p\in C$ and if $X\in\Sigma_p$, then
   \[   \frac{1}{\lambda(C)}F(p,X)\le F(p,-X)\le\lambda(C)\cdot F(p,X).    \]
In particular, if $\sigma:[0,1]\to C$ is a smooth curve, then the length
$L(\sigma):=\int^1_0F(\sigma(t),\dot\sigma(t))dt$ of $\sigma$ satisfies
     \[  \frac{1}{\lambda(C)}L(\sigma)\le L(\sigma^{-1})\le\lambda(C)\cdot L(\sigma).   \]
Here we set $\sigma^{-1}(t):=\sigma(1-t)$, $t\in[0,1]$ the reverse curve of $\sigma$. 

It is well known
(see for example \cite{BCS})  that the topology of $(M,F)$ as an inner metric space is equivalent to that of $M$ as a manifold. For a compact set $C\subset M$, the inner metric $d_F$ of $(M,F)$ induced from the Finslerian fundamental function has the property:
   \[  \frac{1}{\lambda(C)}d_F(p,q)\le d_F(q,p)\le \lambda(C)\cdot d_F(p,q),\quad\forall p,q\in C.  \] 

%

 Let ${\rm inj}:(M,F)\to\R_+$ be the {\it injectivity radius function} of the exponential map. Namely, ${\rm inj}(p)$ for a point $p\in M$ is the maximal radius of the ball centred at the origin of the tangent space $T_pM$ at $p$ on which $\exp_p$ is injective. 
 
A classical result due to J. H. C. Whitehead~\cite{Whitehead} states that 
there exists a {\it convexity radius function} $r : (M, F ) \to \R$ such
that if $B(p, r) := \{x \in M | d(p, x) < r\}$ is an $r$-ball centered at $p$, then
$B(q, r' ) \subset B(p, r)$ for every $q \in B(p, r(p))$ and for every 
$r' \in (0, r(p))$ is
{\it strongly convex}.
 Namely, the distance function to $p$ is strongly convex along every geodesic in $B(p,r)$, $r\in(0,r(p))$, if its extension does not pass through $p$.

 A closed set $U \subset M$ is called {\it locally convex} if and only if
$U \cap B(p, r)$, for every $x \in U$ and for some $r \in (0, r(p))$, is convex. 
Pay attention to the fact that this definition has sense only for closed sets, since every open set is obviously locally convex.

 A set $V\subset M$ is called {\it totally convex} if and only if every geodesic joining two points on $V$ is contained entirely in $V$. A closed hemi-sphere in the standard sphere $\Sph^n$ is locally convex and an open hemi-sphere is strongly convex, while $\Sph^n$ itself is the only one totally convex set on it. 
 
 The minimum set of a convex function on $(M,F)$ is totally convex, if it exists.\par\medskip
 
 \begin{proposition} {\rm (see \cite{Sabau} ; Theorem 4.6)}. Let $C \subset M$ be a compact
set. Let $\lambda(C)$ be the reversibility constant of the compact Finsler space $(C,F)$. If $r(C)$ and ${\rm inj}_C$ are
the convexity and injectivity radii of $C$, respectively, we then have:
\begin{equation}
r(C) \leq
\frac{\lambda}{1+\lambda}{\rm inj}_C.
\end{equation}
\end{proposition}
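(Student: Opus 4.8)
The plan is to prove the inequality in its contrapositive form: I will show that whenever $\rho > \frac{\lambda}{1+\lambda}\,{\rm inj}_C$ (where $\lambda = \lambda(C)$) there exists a ball of radius $\rho$ that fails to be strongly convex, so that no such $\rho$ can serve as a convexity radius and hence $r(C)\le\frac{\lambda}{1+\lambda}\,{\rm inj}_C$. First I would realize the injectivity radius by a concrete cut point: since $C$ is compact and the injectivity radius function is continuous, the value $i := {\rm inj}_C$ is attained as $i = d(p,q)$ for some $p\in C$ and a (forward) cut point $q$ of $p$. The Finsler analogue of the classical dichotomy for cut points then applies: either there exist two distinct forward minimizing geodesics $\sigma_1,\sigma_2$ from $p$ to $q$, or $q$ is the first conjugate point of $p$ along a minimizing geodesic.

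In the main case (two minimizing geodesics) the argument is short. I parametrize $\sigma_1$ by arc length on $[0,i]$ and place the centre of the test ball at the interior point $m := \sigma_1(t)$, $t\in(0,i)$. Because $\sigma_1|_{[t,i]}$ is a forward minimizing subarc, the forward distance from $m$ to $q$ is exactly $d(m,q)=i-t$. The forward distance from $m$ back to $p$ is bounded by the length of the reverse curve of $\sigma_1|_{[0,t]}$, so the reversibility estimate of Section~2 gives $d(m,p)\le\lambda(C)\,t$. Choosing $t=\frac{i}{1+\lambda}$ balances the two quantities, and both become $\frac{\lambda}{1+\lambda}\,i$; consequently, for any $\rho>\frac{\lambda}{1+\lambda}\,i$ one gets $d(m,q)<\rho$ and $d(m,p)<\rho$, so that $p,q\in B(m,\rho)$. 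But $p$ and $q$ are joined by the two distinct minimizing geodesics $\sigma_1$ and $\sigma_2$, which contradicts the fact that in a strongly convex ball any two of its points are joined by a \emph{unique} minimizing geodesic. Hence $B(m,\rho)$ is not strongly convex, as required. Note that the placement $t=\frac{i}{1+\lambda}<\frac{i}{2}$ (for $\lambda>1$) has a clear meaning: the centre is pushed toward $p$ precisely to compensate for the $\lambda$-fold cost of the backward trip, and this is what turns the Riemannian factor $\tfrac12$ into $\frac{\lambda}{1+\lambda}$.

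The main obstacle is the conjugate case, where only one minimizing geodesic from $p$ to $q$ need exist and the clean non-uniqueness contradiction above is unavailable. Here I would instead exploit the focusing of geodesics: since $q$ is conjugate to $p$, the exponential map $\exp_p$ is singular at the corresponding vector, so arbitrarily close to $q$ there are pairs of points admitting non-unique (or non-length-minimizing) connecting geodesics, and a Jacobi-field/index-form analysis shows that the geodesic spheres cease to be convex beyond the focal radius; a limiting argument then recovers the same threshold $\frac{\lambda}{1+\lambda}\,i$. A secondary technical point, which I would settle at the outset, is that the reversibility constant $\lambda(C)$ controls only curves lying in $C$: one must check that the short geodesic subarcs and the test balls above remain in $C$, or else pass to a slightly larger compact set whose reversibility constant is as close to $\lambda(C)$ as desired, so that the estimate $d(m,p)\le\lambda(C)\,t$ is legitimate.
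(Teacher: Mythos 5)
First, a point of comparison that you could not have known: the paper does not prove this proposition at all --- it is quoted verbatim from \cite{Sabau} (Theorem 4.6 of a preprint), so there is no internal proof to measure your argument against. Judged on its own, your main case is the right idea and almost certainly the intended one: realize $i:={\rm inj}_C$ by a cut pair $(p,q)$ with two forward minimizing geodesics $\sigma_1,\sigma_2$, place the centre at $m=\sigma_1(t)$ with the weighted-midpoint choice $t=i/(1+\lambda)$ so that $d(m,q)=i-t$ and $d(m,p)\le\lambda t$ both equal $\frac{\lambda}{1+\lambda}\,i$, and contradict uniqueness of minimizers in a strongly convex forward ball. This is exactly the non-reversible deformation of the classical $r\le\frac12\,{\rm inj}$ argument, and the factor $\frac{\lambda}{1+\lambda}$ falls out for precisely the reason you state. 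Two small items you flag but should actually verify: attainment of ${\rm inj}_C$ by a cut point on the compact set $C$, and the double-minimizer/conjugate-point dichotomy at the cut value, both need the Finsler cut-locus theory (e.g.\ \cite{BCS}) rather than its Riemannian counterpart, since forward and backward geodesics differ.

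The genuine gap is the conjugate case. Your assertion that ``arbitrarily close to $q$ there are pairs of points admitting non-unique connecting geodesics'' is not justified: singularity of $\exp_p$ at $i\dot\sigma(0)$ guarantees that $\sigma$ stops minimizing past $q$, but it does not by itself produce two distinct \emph{minimizing} geodesics between nearby points, and the ``limiting argument'' that is supposed to recover the threshold $\frac{\lambda}{1+\lambda}\,i$ is never specified. A concrete repair along your own lines: with $t=i/(1+\lambda)$ and any $\rho>\frac{\lambda}{1+\lambda}\,i$, the estimates $d(m,\sigma(s))\le s-t$ for $s\ge t$ and $d(m,\sigma(s))\le\lambda(t-s)$ for $s\le t$ show that the entire segment $\sigma([0,i+\varepsilon])$ lies in $B(m,\rho)$ for small $\varepsilon>0$, so the putatively convex ball contains a geodesic segment with an interior conjugate point; one must then derive a contradiction with strong convexity via the index form (second variation of the distance function from a suitable point of the ball), and this step has to be checked against the precise Whitehead-type definition used here, which exempts geodesics whose extension passes through the centre --- the radial geodesic $\sigma$ itself is exactly such an exempted geodesic, so the contradiction cannot be read off from $\sigma$ alone. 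Until that case is written out, the proof establishes the inequality only when the injectivity radius is realized by a double minimizer.
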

\begin{proposition}\label{Prop:Lipschitz}
{\rm
A convex function $\varphi:(M,F)\to\R$ defined as in \eqref{eq:convex}
is locally Lipschitz.
}
\end{proposition}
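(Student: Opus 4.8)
The plan is to localize the problem and then to extract the Lipschitz bound from the three-slope inequality for convex functions of one real variable, once $\varphi$ has been shown to be locally bounded. Fix $p_0\in M$. Using the Whitehead convexity radius function $r$ together with forward completeness, I choose $\rho,s_0>0$ so small that the closed forward ball $C:=\ol B(p_0,R)$ is compact and lies inside a strongly convex ball about $p_0$, in which any two points are joined by a unique minimizing geodesic contained in the ball; here $R$ is taken large enough that every forward geodesic issuing from $B(p_0,\rho)$, when prolonged forward by arclength $s_0$, still lies in $C$. Write $\lambda:=\lambda(C)$ for the reversibility constant, so that $\tfrac1\lambda\,d(a,b)\le d(b,a)\le\lambda\,d(a,b)$ for $a,b\in C$, and set $\omega:=\sup_C\varphi-\inf_C\varphi$. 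It suffices to prove that $\varphi$ is Lipschitz on $B(p_0,\rho)$, and the whole argument rests on first showing $\omega<\infty$.

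The upper bound is the delicate point. Choose a Euclidean simplex $\Delta\subset T_{p_0}M$ with the origin in its interior and vertices $V_0,\dots,V_n$ so small that $v_i:=\exp_{p_0}(V_i)$ all lie in $C$, and build a continuous map $\Psi:\Delta\to M$ by realizing each barycentric weight through an iterated geodesic combination $m_t(a,b):=\g_{a\to b}(t)$, where $\g_{a\to b}$ is the forward minimizing geodesic from $a$ to $b$ inside the convex ball. Because $\varphi$ is convex along every such geodesic, a trivial induction gives $\varphi(\Psi(\sigma))\le\max_i\varphi(v_i)$ for every $\sigma\in\Delta$. The point to be checked — and the main obstacle of the proof — is that $\Psi(\Delta)$ contains a neighborhood of $p_0$: since forward geodesics between nearby points are, in $\exp_{p_0}$-coordinates, a small $C^0$-perturbation of the corresponding Euclidean segments, the map $\exp_{p_0}^{-1}\circ\Psi$ is a small perturbation of the affine barycentric parametrization of $\Delta$ onto itself, and invariance of domain (or a degree argument on $\partial\Delta$) then forces $\Psi(\Delta)$ to cover a ball $B(p_0,\rho)$ once $\rho$ is small. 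Hence $\varphi\le\max_i\varphi(v_i)=:B<\infty$ there. The lower bound is then immediate: given $x\in B(p_0,\rho)$, extend the minimizing geodesic from $x$ to $p_0$ forward past $p_0$ by an equal arclength to a point $x'\in C$, so that $p_0$ is its midpoint; convexity yields $\varphi(p_0)\le\tfrac12\varphi(x)+\tfrac12\varphi(x')$, whence $\varphi(x)\ge 2\varphi(p_0)-B$. Thus $\omega<\infty$, and note that this step uses only forward prolongation.

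Finally I derive the Lipschitz estimate. Let $x,y\in B(p_0,\rho)$ and let $\g:[0,\ell]\to M$ be the forward minimizing geodesic from $x$ to $y$, parametrized by arclength, $\ell=d(x,y)$, contained in the convex ball. Prolonging $\g$ forward to $\g(\ell+s_0)\in C$ and applying the three-slope inequality to the convex function $t\mapsto\varphi(\g(t))$ gives
\[
\frac{\varphi(y)-\varphi(x)}{\ell}\le\frac{\varphi(\g(\ell+s_0))-\varphi(y)}{s_0}\le\frac{\omega}{s_0},
\]
so $\varphi(y)-\varphi(x)\le\tfrac{\omega}{s_0}\,d(x,y)$. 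Running the same argument along the forward geodesic from $y$ to $x$ — which again needs only forward completeness, so no backward prolongation is required — yields $\varphi(x)-\varphi(y)\le\tfrac{\omega}{s_0}\,d(y,x)$. Combining the two bounds gives $|\varphi(x)-\varphi(y)|\le\tfrac{\omega}{s_0}\max\{d(x,y),d(y,x)\}$, and since $d(y,x)\le\lambda\,d(x,y)$ we conclude
\[
|\varphi(x)-\varphi(y)|\le\frac{\lambda\,\omega}{s_0}\,d(x,y)\qquad\text{for all }x,y\in B(p_0,\rho),
\]
which is the desired local Lipschitz property. The one genuine difficulty, as indicated, is the covering claim $\Psi(\Delta)\supset B(p_0,\rho)$ underlying the upper bound; everything else reduces to the scalar three-slope inequality together with the uniform control of the asymmetry provided by $\lambda(C)$ on the compact set $C$.
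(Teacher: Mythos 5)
Your proof is correct in substance and its final mechanism is the same as the paper's: the three-slope inequality along a forward-minimizing geodesic prolonged forward by a fixed margin, followed by the reversibility constant $\lambda(C)$ to convert $d(y,x)$ into $d(x,y)$. (The paper uses an arbitrary compact set $C$ and its $1$-neighborhood $C_1$ with margin $1$ instead of your $s_0$, and the two-sided ratio bound $L(C)$ instead of your $\lambda(C)$; these are cosmetic differences.) Where you genuinely diverge is that you first \emph{prove} local boundedness of $\varphi$ via the iterated-geodesic-combination map $\Psi$ on a simplex, a degree argument showing $\Psi(\Delta)$ covers a neighborhood of $p_0$, and the forward midpoint trick for the lower bound. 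The paper simply sets $\Lambda:=\sup_{C_1}\varphi$ and $\lambda:=\inf_{C_1}\varphi$ and uses their finiteness without comment; since the definition \eqref{eq:convex} does not assume $\varphi$ continuous, this finiteness is not automatic, so your extra step is not redundant pedantry but fills a real (if standard) lacuna. Your identification of the covering claim $\Psi(\Delta)\supset B(p_0,\rho)$ as the one nontrivial point, and of the degree argument on $\partial\Delta$ (rather than invariance of domain, which would require injectivity of $\Psi$) as the correct tool, is sound.

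One small repair is needed in your bookkeeping of where the oscillation bound holds. You define $\omega:=\sup_C\varphi-\inf_C\varphi$ over the large ball $C=\ol B(p_0,R)$, but the simplex argument only bounds $\varphi$ above on $\Psi(\Delta)\supset B(p_0,\rho)$, and the midpoint trick only bounds it below there; meanwhile both the prolonged point $\g(\ell+s_0)$ in the slope inequality and the reflected point $x'$ in the lower-bound step can land in $C\setminus B(p_0,\rho)$. So as written, $\omega<\infty$ is asserted on a set larger than the one on which you have established it. This is harmless and standard to fix --- either run the simplex construction so that the upper bound holds on a ball large enough to contain all the prolonged points (shrinking $\rho$ afterwards), or cover the compact set $C$ by finitely many such convex balls and chain the bounds --- but the fix should be stated, since the quantity $\omega/s_0$ that appears in your final Lipschitz constant is only meaningful once this is done.
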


\begin{proof}

Let $C\subset M$ be an arbitrary fixed compact set and $C_1:=\{x\in M\;|\; d(C,x)\le 1\}$. Here we set $d(C,x):=\min\{d(y,x)\,|\,y\in C\}$. For points $x, y\in C_1$ we denote by $\g_{xy}:[0,d(x,y)]\to M$, $\g_{yx}:[0,d(y,x)]\to M$ minimizing geodesics with $\g_{xy}(0)=x$, $\g_{xy}(d(x,y)=y$ and $\g_{yx}(0)=y$, $\g_{yx}(d(y,x))=x$. The slope inequalities along the convex functions: $\varphi\circ\g_{xy}|_{[0,d(x,y)+1]}$ and $\varphi\circ\g_{yx}|_{[0,d(y,x)+1]}$ imply that if $\Lambda:=\sup_{C_1}\varphi$ and $\lambda:=\inf_{C_1}\varphi$ (see Figure \ref{A convex function is locally Lipschitz}), then
  \[  \frac{\varphi(y)-\varphi(x)}{d(x,y)}\le\Lambda-\lambda,\quad  \frac{\varphi(x)-\varphi(y)}{d(y,x)}\le\Lambda-\lambda.    \]


\setlength{\unitlength}{1cm} 
 \begin{figure}[h]
    \begin{center}
  \begin{picture}(9, 9)
  \thicklines
  \put(1,1){\line(0,1){8}}
  \put(0,2){\line(1,0){10}}
  \put(0.5,1.5){$O$}
  \put(1.2,2.5){$\varphi\circ\g_{xy}$}
  \put(1.8,5){$\varphi\circ\g_{yx}$}
  \thinlines
  \multiput(1,4)(0.3,0){27}{\line(1,0){0.1}}
  \multiput(1,6)(0.3,0){27}{\line(1,0){0.1}}  
   \thicklines
  \qbezier(1,4)(3,0)(7.5,8)
  \qbezier(1,6)(5,0)(10,6)
  \thinlines
  \put(6.3,2){\line(0,1){4.03}}

  \put(7.3,2){\line(0,1){5.6}}

  \put(7.9,2){\line(0,1){2}}
  \put(8.9,2){\line(0,1){2.8}}

  \put(0,4){$\varphi(x)$}
  \put(0,6){$\varphi(y)$}
  \put(5.6,1.6){$d(x,y)$}
  \put(7.5,1.6){$d(y,x)$}
\put(6.7,2.2){$1$}
  \put(8.3,2.2){$1$}
  \qbezier(6.3,2)(6.4,2.1)(6.5,2.2)
  \qbezier(7.1,2.2)(7.2,2.1)(7.3,2)
  \qbezier(7.9,2)(8,2.1)(8.1,2.2)
  \qbezier(8.7,2.2)(8.8,2.1)(8.9,2)
  \end{picture}
  \caption{A convex function is locally Lipschitz.}\label{A convex function is locally Lipschitz}
    \end{center}
  \end{figure}
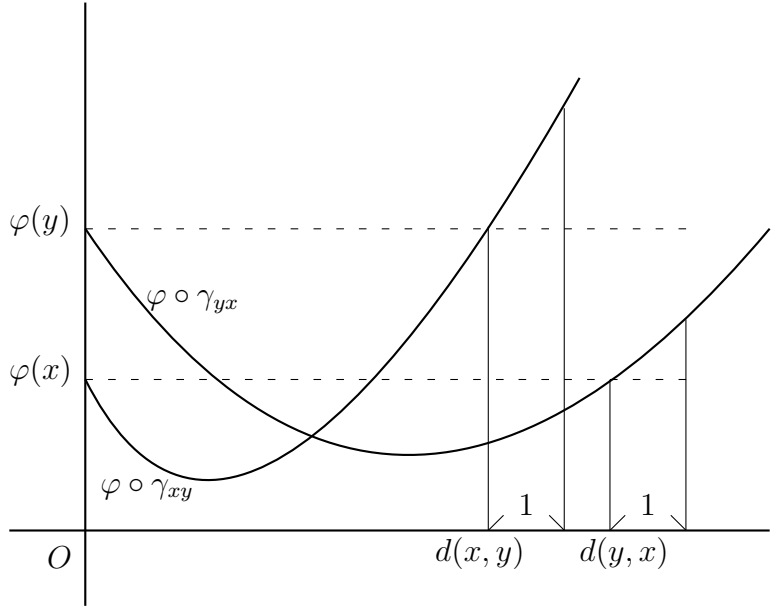
  
  \bigskip
  
There exists a constant $L=L(C)>0$ such that
   \[    \sup\{\frac{d(x,y)}{d(y,x)}\,|\, x,y\in C\}\le L.    \]

Therefore we have
   \[   |\frac{\varphi(x)-\varphi(y)}{d(x,y)}|,\; |\frac{\varphi(y)-\varphi(x)}{d(y,x)}|\le L(\Lambda-\lambda).     \]
$\qedd$
\end{proof}

\begin{proposition}\label{Prop:convex}
{\rm
If $C\subset (M,F)$ is a closed locally convex set, then there exists a $k$-dimensional totally geodesic submanifold $W$ of $M$ contained in $C$ and its closure coincides with $C$.
}
\end{proposition}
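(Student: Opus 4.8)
The plan is to build $W$ as the union of all maximal totally geodesic pieces of $C$ through its "interior" points, show this union is an embedded submanifold of some fixed dimension $k$, and verify that its closure recovers all of $C$.

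The plan is to reconstruct $C$ from its \emph{regular part}, using the tangent cone of $C$ at each point. Fix $p\in C$ and a radius $\rho<r(p)$ so small that $B(p,\rho)$ is strongly convex and $C\cap B(p,\rho)$ is convex. Define the tangent cone
$$ C_p:=\{v\in T_pM \mid \exp_p(tv)\in C \text{ for all small } t\ge 0\}. $$
The first step is to show that $C_p$ is a closed convex cone and that, locally, $C$ is its radial image, $C\cap B(p,\rho)=\exp_p\big(C_p\cap B_\rho(0)\big)$. The inclusion ``$\supseteq$'' is the definition together with convexity of $C\cap B(p,\rho)$ (the whole radial segment from $p$ lies in $C$); the inclusion ``$\subseteq$'' holds because for $q\in C\cap B(p,\rho)$ the minimizing geodesic from $p$ to $q$ lies in the convex set $C\cap B(p,\rho)$, so its initial direction lies in $C_p$. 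Convexity of $C_p$ follows from a first-order computation: given $v,w\in C_p$, the $C$-geodesic joining $\exp_p(tv)$ and $\exp_p(tw)$ has midpoint $\exp_p\big(t\,\tfrac{v+w}{2}+o(t)\big)$ because $d(\exp_p)_0=\mathrm{id}$, and closedness of $C$ forces $\tfrac{v+w}{2}\in C_p$. Here the non-reversibility of $F$ is harmless: it enters only through the constant $\lambda(C)$ comparing forward and backward lengths, whereas the linear structure of $C_p$ is governed by the first-order, hence Euclidean, behaviour of $\exp_p$ at the origin.

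Second, I would introduce the lineality space $L_p:=C_p\cap(-C_p)$, the span of the two-sided geodesic directions through $p$ that stay in $C$; it is the maximal linear subspace contained in the convex cone $C_p$. Set $k:=\max_{p\in C}\dim L_p$ and define
$$ W:=\{\,p\in C \mid \dim L_p=k\,\}. $$
The key assertion is that at a point $p\in W$ the cone is already linear, $C_p=L_p$, so that $C\cap B(p,\rho)=\exp_p\big(L_p\cap B_\rho(0)\big)$ is exactly the radial image of a $k$-plane. Indeed, if some $u\in C_p\setminus(-C_p)$ existed, then at the nearby point $q=\exp_p(\e u)\in C$ the lineality space would contain both the transported $L_p$ and the direction $u$ (now two-sided, since one can move forward and backward along $u$ inside $C$), giving $\dim L_q>k$ and contradicting maximality. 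Establishing this linearity, i.e.\ the semicontinuity and jump behaviour of $p\mapsto\dim L_p$ and the resulting relative openness of $W$ in $C$, is the main obstacle; everything else is first-order convex geometry.

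With linearity in hand, the third step shows that $W$ is a $k$-dimensional totally geodesic submanifold without boundary. Near $p\in W$ it coincides with the totally geodesic slice $\exp_p(L_p)$; any geodesic issuing from $p$ in a direction of $L_p$ remains two-sided in $C$, and its lineality space stays $k$-dimensional along it, so it remains in $W$ and the local slices patch to a genuine submanifold. Total geodesy is then immediate: a short geodesic between two points of $W$ lies in $C$ by convexity and in the local $k$-slice by the local model. Finally, $\bar W=C$: since $C$ is closed, $\bar W\subseteq C$; conversely every $p\in C$ is a limit of points of $W$, because the points of $C$ lying in the relative interior of the local convex piece $C\cap B(p,\rho)$ have tangent cone equal to its full affine hull, hence maximal lineality $k$, and relative-interior points are dense in any convex set. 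This produces the desired totally geodesic $W$ with $\bar W=C$.
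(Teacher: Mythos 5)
Your overall strategy --- recover $C$ from the tangent cones $C_p$, single out the points of maximal lineality, and show the cone is linear there --- is the classical Cheeger--Gromoll route, and it is genuinely different from the paper's proof, which instead fixes a smooth submanifold $W(p)\subset C$ of maximal dimension $k$ and shows directly, by sweeping out the cone of minimizing geodesics from $W(p)\cap B(p;r)$ to a hypothetical point $q\in (C\setminus W(p))\cap B(p;r)$, that $W(p)\cap B(p;r)=C\cap B(p;r)$. As written, however, your argument has genuine gaps. First, the local model $C\cap B(p,\rho)=\exp_p\bigl(C_p\cap B_\rho(0)\bigr)$ is false: convexity lets you join two points already in $C$ by a geodesic inside $C$, it does not let you prolong a radial geodesic inside $C$, so from $\exp_p(tv)\in C$ for all small $t$ you cannot conclude $\exp_p(tv)\in C$ up to $t=\rho/F(p,v)$ (take $C$ to be a short geodesic segment emanating from $p$). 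Only the inclusion $C\cap B(p,\rho)\subseteq\exp_p(C_p)$ survives, and with it the conclusion that $C$ near a point of $W$ actually \emph{fills} a neighbourhood of $p$ in the $k$-slice --- which is what makes $W$ a manifold relatively open in $C$ --- needs a separate argument (e.g.\ a geodesic convex hull of points $\exp_p(\pm t_iv_i)$ for a basis of $L_p$). The same difficulty infects your proof that $C_p$ is a closed convex cone: knowing $\exp_p\bigl(t\,\tfrac{v+w}{2}+o(t)\bigr)\in C$ gives a curve in $C$ merely tangent to $\tfrac{v+w}{2}$, not points on the radial geodesic in that direction, and closedness of $C$ alone does not bridge the two.

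Second, the step you yourself flag as ``the main obstacle'' --- that $L_q$ contains the transported $L_p$ together with $u$, hence the jump $\dim L_q>k$, the linearity $C_p=L_p$ on $W$, the relative openness of $W$, and the total geodesy of the slices $\exp_p(L_p)$ --- is precisely the content of the proposition; in a curved space it is not first-order convex geometry but requires constructing, for each $w\in L_p$, a two-parameter family of geodesics in $C$ passing near $q$, which is exactly the cone-of-geodesics construction the paper carries out. Finally, non-reversibility is not as harmless as you claim: $\exp_p(-tv)$ is the forward geodesic in the direction $-v$, not the backward prolongation of $\gamma_v$, so $v\in C_p\cap(-C_p)$ does not by itself give a single geodesic through $p$ lying in $C$, and the passage from the ``two-sided'' cone $L_p$ to a totally geodesic submanifold must invoke the hypothesis that convexity holds along both forward and backward geodesics explicitly, not merely through the reversibility constant $\lambda(C)$. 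Until these points are supplied, the proposal is a plausible programme rather than a proof.
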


\begin{proof}
Let $r:(M,F)\to\R$ be the convexity radius function. For every point $p\in C$ there exists a $k(p)$-dimensional smooth submanifold of $M$ which is contained entirely in $C$ and such that $k(p)$ is the maximal dimension of all such submanifolds in $C$, where $0\le k(p)\le n$. At least $\{p\}$ is a $0$-dimensional such a submanifold contained in $C$. 

Let $K\subset M$ be a large compact set containing $p$ and $r(K)$ the convexity radius of $K$, namely $r(K):=\min\{ r(x)\,|\, x\in K\}$. Let $k:=\max\{ k(p)\,|\, p\in C\}$. 

Let $W(p)\subset C$ be a $k$-dimensional smooth submanifold of $M$. Suppose that $W(p)\cap B(p;r)\subsetneqq C\cap B(p;r)$ for a sufficiently small $r\in(0,r(K))$. Then, there exists a point $q\in B(p;r)\cap (C\setminus W(p))$. Clearly $\dot\g_{pq}(0)$ is transversal to $T_pW(p)$, and hence a family of minimizing geodesics $\{\g_{xq}:[0,d(x,q)]\to B(p;r)\,|\, x\in W(p)\cap B(p;r)\}$ with $\g_{xq}(0)=x$, $\g_{xq}(d(x,q))=q$ has the property that every $\dot{\g}_{xq}(0)$ is transversal to $T_xW(p)$. Therefore, this family of geodesics forms a $(k+1)$-dimensional submanifold contained in $C$, a contradiction to the choice of $k$. This proves $W(p)\cap B(p;r)=C\cap B(p;r)$ for a sufficiently small $r\in(0,r(K))$. We then observe that $\cup_{p\in C}W(p)=:W\subset C$ forms a $k$-dimensional smooth submanifold which is totally geodesic. Indeed, for any tangent vector $v$ to $W$, there exists $p\in C$ such that $v\in T_pW(p)$ and due to the convexity of $C$, the geodesic $\gamma_v:[0,\varepsilon]\to M$ cannot leave the submanifold $W$.

\setlength{\unitlength}{2cm} 
\begin{figure}[h]
    \begin{center}
\begin{picture}(6, 5)
\put(3,2){\circle*{0.05}}
\put(3,1.7){$x$}
\put(3,4){$B(x,r(K))$}
\put(2,4){$\bar{W}$}
\put(1.77,2){\circle*{0.05}}
\put(1.77,2){\circle{1}}
\put(1.67,2.2){$z$}
\put(1.5,1.3){$U\subset W$}

\put(2.48,2){\circle*{0.05}}

\put(2.6,2.2){$y$}
\qbezier(4.9,2.0)(4.9,2.787)(4.3435,3.3435) 
\qbezier(4.3435,3.3435)(3.787,3.9)(3.0,3.9) 
\qbezier(3.0,3.9)(2.213,3.9)(1.6565,3.3435) 
\qbezier(1.6565,3.3435)(1.1,2.787)(1.1,2.0) 
\qbezier(1.1,2.0)(1.1,1.213)(1.6565,0.6565) 
\qbezier(1.6565,0.6565)(2.213,0.1)(3.0,0.1) 
\qbezier(3.0,0.1)(3.787,0.1)(4.3435,0.6565) 
\qbezier(4.3435,0.6565)(4.9,1.213)(4.9,2.0)


\qbezier(1.8,5)(3.3,3.2)(1.8,0)
\qbezier(3,2)(2.5,2)(1.77,2)
\thicklines
\qbezier[40](3,2)(2.5,2.2)(1.85,2.35)

\qbezier[40](3,2)(2.5,2.1)(1.85,2.2)

\qbezier[40](3,2)(2.5,1.8)(1.85,1.65)

\end{picture}
\caption{The closure $\bar W$ of $W$ coincides with $C$.}\label{The closure}
    \end{center}
  \end{figure}
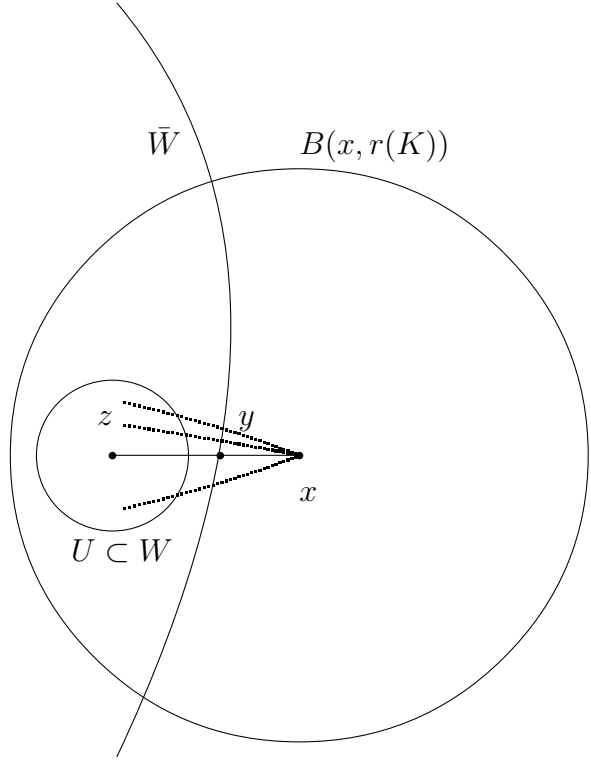



We finally prove that the closure $\bar{W}$ of $W$ coincides with $C$. Indeed, suppose that there exists a point $x\in C\setminus\bar W$. We then find a point $y\in\bar W\setminus W$ such that 
$d(x,y)=d(x,\bar W)<r(K)$. If $\dot\g_{xy}(d(x,y))\in T_y\bar W:=\lim_{y_j\to y}T_{y_j}M$, then 
$\gamma_{xy}(d(x,y)+\varepsilon)\in W$ for a sufficiently small $\varepsilon>0$. Let $U\subset W\cap B(x,r(K))$ be an open set around $\gamma_{xy}(d(x,y)+\varepsilon)$.

Then a family of geodesics
   \begin{equation}\label{eq:locallyconvex}
      \{\g_{xz}:[0,d(x,z)]\to B(x;r(K))\,|\, z\in U\}   
   \end{equation}   
forms a $k$-dimensional submanifold contained in $W$ and hence $y\in W$, 
a contradiction to $y\in \bar W\setminus W$.  Therefore, $\dot\g_{xy}(d(x,y))$ does not belong to $T_y\bar W$, and (\ref{eq:locallyconvex}) again forms a $(k+1)$-dimensional submanifold in $C$, a contradiction to the choice of $k$ (see Figure \ref{The closure}).
$\qedd$
\end{proof}

Let $C\subset M$ be a closed locally convex set and $p\in C$. There exists a totally geodesic submanifold $W\subset C$ as stated in Proposition \ref{Prop:convex}. We call $W$ the {\it interior of $C$} and denote it by ${\rm Int}(C)$. The {\it boundary of $C$} is defined by $\partial C:=C\setminus{\rm Int}(C)$, and {\it the dimension of $C$} is defined by $\dim C:=\dim{\rm Int}(C)$. The {\it tangent cone $\mathcal{C}_p(C)\subset T_pM$} of $C$ at a point $p\in C$ is defined as follows:
       \begin{equation}\label{def:tangent_cone}
   \mathcal{C}_p(C):=\{\xi\in T_pM\,|\, \exp_pt\xi\in{\rm Int}(C),\quad\text{for some $t>0$}\}.     
       \end{equation}

Clearly, $\mathcal{C}_p(C)=T_p{\rm Int}(C)\setminus\{0\}$ for $p\in{\rm Int}(C)$. 

We also define the tangent space $T_pC$ of $C$ at a point $p\in\partial C$ by $T_pC:=\lim_{q\to p}T_q{\rm  Int}(C)$. 

We claim that there exists for every point $p\in \partial C$ an open half space $T_pC_+\subset T_pC$ of $T_pC$ such that $\mathcal{C}_p(C)$ is contained entirely in an open half space $T_pC_+\subset T_pC$ :   
        \begin{equation}\label{eq:tangentcone}   
  \mathcal{C}_p(C)\subset T_pC_+ \subset T_pC:=\lim_{q\to p}\,T_q{\rm Int}(C),\; q\in{\rm Int}(C).     
        \end{equation}
        
Indeed, let $p\in\partial C$ and $\g_{qp}:[0,d(q,p)]\to B(p;r(K))$ for every point $q\in B(p;r(K))\cap {\rm Int}(C)$ be a minimizing geodesic. Suppose that there is a point $q\in {\rm Int}(C)$ such that $z:=\g_{qp}(d(q,p)+\e)\in C$ for a sufficiently small $\e>0$. 
We then have $\dot\gamma_{qp}(d(q,p))\in T_pC$, and hence the convex cone as obtained in 
 \eqref{eq:locallyconvex}
 is contained in $C$, a contradiction to the choice of $p\in \partial C$.
From the above argument we observe that if $p\in\partial C$, then there exists a hyperplane $H_p\subset T_pC$ such that $\mathcal{C}_p(C)$ is contained in a half space $T_p(C)_+\subset T_pC$ bounded by $H_p$.\par\medskip

\begin{proposition}\label{prop:level}
{\rm
Let $\varphi:(M,F)\to\R$ be a convex function. Then, $M^a_a(\varphi)$ for every $a>\inf_M\varphi$ is an embedded topological submanifold of dimension $n-1$.
}
\end{proposition}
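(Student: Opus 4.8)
The plan is to show that each level set $M^a_a(\varphi)$ with $a>\inf_M\varphi$ is locally homeomorphic to $\R^{n-1}$ by analyzing the structure of sublevel sets near a point of the level, using the convexity already established. First I would fix a point $p\in M^a_a(\varphi)$ and a small convex ball $B(p;r)$ with $r\in(0,r(K))$ for a compact neighborhood $K$, inside which the distance function to $p$ is strongly convex and geodesics behave well. The key observation is that the sublevel set $M^a(\varphi)=\{\varphi\le a\}$ is a closed, locally convex set: along any geodesic $\g$, convexity of $\varphi\circ\g$ forces $\{t:\varphi\circ\g(t)\le a\}$ to be an interval, so any geodesic segment joining two points of $M^a(\varphi)\cap B(p;r)$ stays in the sublevel set. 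Hence Proposition~\ref{Prop:convex} applies: $M^a(\varphi)$ contains a maximal totally geodesic submanifold $W={\rm Int}(M^a(\varphi))$ whose closure is $M^a(\varphi)$, with a well-defined boundary $\partial M^a(\varphi)$ and tangent cone structure given by~\eqref{eq:tangentcone}.

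Next I would argue that $\dim M^a(\varphi)=n$, i.e. the sublevel set has nonempty interior. Since $\varphi$ is locally non-constant and $a>\inf_M\varphi$, one can find points where $\varphi<a$ in every neighborhood of a suitable interior point; more carefully, because $\varphi$ is continuous (Proposition~\ref{Prop:Lipschitz}) the open set $\{\varphi<a\}$ is nonempty and its closure meets the level. With $\dim M^a(\varphi)=n$, the interior ${\rm Int}(M^a(\varphi))$ is an open $n$-dimensional subset and the level $M^a_a(\varphi)$ should be identified with the topological boundary $\partial M^a(\varphi)$. The crux is then to prove that near each $p\in M^a_a(\varphi)$ this boundary is an embedded $(n-1)$-manifold. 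For this I would use the half-space property~\eqref{eq:tangentcone}: at each boundary point $p$ the tangent cone $\mathcal{C}_p(M^a(\varphi))$ lies in a half-space bounded by a hyperplane $H_p\subset T_pM$. This supplies a well-defined ``inner normal'' direction, allowing one to express the level locally as a graph over $H_p$ via the exponential map.

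The concrete local parametrization I would build is as follows. Choose a direction $\nu\in T_pM$ transversal to $H_p$ pointing into ${\rm Int}(M^a(\varphi))$. For each $u$ in a small disk $D\subset H_p$, consider the geodesic $s\mapsto\exp_p(u+s\nu)$ and use strong convexity of $\varphi$ along it together with the intermediate value theorem to locate the unique parameter $s=h(u)$ at which $\varphi=a$. Monotonicity of $\varphi$ in the $\nu$-direction (a consequence of convexity and of $\nu$ pointing strictly into the sublevel set) guarantees uniqueness of the crossing, and the local Lipschitz property of $\varphi$ yields continuity of $h$. The map $u\mapsto\exp_p(u+h(u)\nu)$ is then a homeomorphism from $D$ onto a neighborhood of $p$ in $M^a_a(\varphi)$, exhibiting the level as a topological $(n-1)$-submanifold embedded in $M$.

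The main obstacle I anticipate is establishing the strict monotonicity of $\varphi$ along the transversal geodesic rays, which is what makes $h(u)$ well defined and single-valued. Convexity alone only gives that $\varphi\circ\g$ is convex, not strictly monotone; I would need to rule out that $\varphi$ stays equal to $a$ along a transversal segment, and here the hypotheses that $\varphi$ is locally non-constant and that $a$ is a regular value in the sense $a>\inf_M\varphi$ are essential, combined with the half-space property~\eqref{eq:tangentcone} which prevents the tangent cone from being degenerate. A secondary difficulty is the Finslerian asymmetry: forward and backward geodesics differ, so I must take care that the convexity estimates and the choice of crossing geodesic use the reversibility constant $\lambda(K)$ of the compact set $K$ consistently, as in the proof of Proposition~\ref{Prop:Lipschitz}, to ensure the construction is uniform over the disk $D$.
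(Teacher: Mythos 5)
Your overall skeleton matches the paper's: both arguments rest on the sublevel set $M^a(\varphi)$ being closed and (totally, hence locally) convex with nonempty interior, on Proposition~\ref{Prop:convex}, and on the half-space property \eqref{eq:tangentcone} supplying a hyperplane $H_p\subset T_pM$ over which the level is to be written as a graph. The gap is in your concrete parametrization. For $u\neq 0$ the curve $s\mapsto\exp_p(u+s\nu)$ is \emph{not} a geodesic of $(M,F)$: the exponential map at $p$ carries only the rays through the origin of $T_pM$ to geodesics, so a line in $T_pM$ offset from the origin has no reason to map to a geodesic. Consequently $\varphi$ restricted to these curves need not be convex, and the entire mechanism you rely on --- convexity forcing $\{\varphi\le a\}$ to be an interval along the curve, hence uniqueness of the crossing parameter $h(u)$ --- is unavailable. (A secondary slip: you invoke ``strong convexity of $\varphi$'' along these curves, but $\varphi$ is only assumed convex; and the half-space property is established at $p$ only, so even a genuinely geodesic transversal in a \emph{fixed} direction $\nu$ would require an extra argument that it enters the sublevel set uniformly for all nearby starting points.)

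The paper's proof repairs exactly this point by projecting radially from a single interior point rather than translating a fixed transversal direction: fix $q\in B(p;r(p))\cap{\rm Int}(M^a(\varphi))$, so $\varphi(q)<a$, and for each $x\in\exp_p(H_p)\cap B(p;r(p))$ take the genuine minimizing geodesic $\g_{qx}$ from $q$ to $x$ inside the convex ball. Since $\varphi\circ\g_{qx}$ is convex, is $<a$ at $t=0$, and is $\ge a$ at $t=d(q,x)$ (because $x$ cannot lie in ${\rm Int}(M^a(\varphi))$, its direction from $p$ lying on $H_p$ rather than in the open half-space containing the tangent cone), the set $\{t:\varphi\circ\g_{qx}(t)\le a\}$ is an initial interval $[0,t(x)]$ and the crossing parameter $t(x)$ is unique. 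The chart $u\mapsto\g_{qx}(t(x))$ with $x=\exp_p u$, $u\in B_H(O;r(p))$, then gives the desired homeomorphism. If you replace your family of non-geodesic transversals by this pencil of geodesics through $q$, the rest of your argument (continuity of the crossing time from the local Lipschitz property of $\varphi$, and the identification of $M^a_a(\varphi)$ with $\partial M^a(\varphi)$) goes through as you describe.
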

\begin{proof}
Let $p\in M^a_a(\varphi)$ and $q\in B(p;r(p))\cap{\rm Int}(M^a(\varphi))$. There exists a hyperplane $H_p\subset T_pM$ such that 
        \[    
H_p=\partial T_p(M^a(\varphi))_+\quad\text{and}\quad \mathcal{C}_p(M^a(\varphi))\subset T_p(M^a(\varphi))_+.
        \]
Every point $x\in \exp_p(H_p)\cap B(p;r(p))$ is joined to $q$ by a unique minimizing geodesic $\g_{qx}:[0,d(q,x)]\to M$ such that $\g_{qx}(0)=q$, 
$\g_{qx}(d(q,x))=x$. Then there exists a unique parameter $t(x)\in(0,d(q,x)]$ such that $\g_{qx}(t(x))\in M^a_a(\varphi)\cap B(p;r(p))$. Let $B_H(O;r(p))$ be the open $r(p)$-ball in $H_p$ centered at the origin $O$ of $M_p$. We then have a map $\alpha_p:B_H(O;r(p))\to M^a_a(\varphi))$ such that
     \[    \alpha_p(u):=\g_{qx}(t(x)),\quad u\in B_H(O;r(p)),\quad \exp_pu=x.   \]

\setlength{\unitlength}{1.6cm} 
 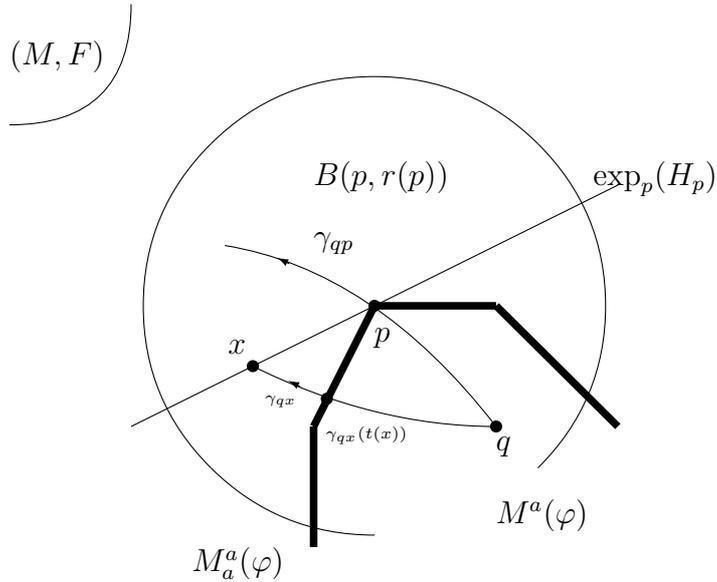
\begin{figure}[h]
    \begin{center}
\begin{picture}(6, 4)
\put(4.8,3){$\exp_p(H_p)$}
\put(2.5,3){$B(p,r(p))$}
\put(0,4){$(M,F)$}
\qbezier(1,4.5)(1,3.5)(0,3.5) 

\put(3,2){\circle*{0.1}}
\put(3,1.7){$p$}
\qbezier(4.9,2.0)(4.9,2.787)(4.3435,3.3435) 
\qbezier(4.3435,3.3435)(3.787,3.9)(3.0,3.9) 
\qbezier(3.0,3.9)(2.213,3.9)(1.6565,3.3435) 
\qbezier(1.6565,3.3435)(1.1,2.787)(1.1,2.0) 
\qbezier(1.1,2.0)(1.1,1.213)(1.6565,0.6565) 
\qbezier(1.6565,0.6565)(2.213,0.1)(3.0,0.1) 
\qbezier(4.3435,0.6565)(4.9,1.213)(4.9,2.0)

\qbezier(4,1)(3,2.3)(1.77,2.5)
\put(4,1){\circle*{0.1}}
\put(4,0.8){$q$}
\put(2.3,2.35){\vector(-2,1){0.1}}
\put(2.5,2.5){$\g_{qp}$}
\put(4,0.2){$M^{a}(\varphi)$}
\put(1,1){\line(2,1){4}}
\qbezier(2,1.5)(3,1)(4,1)
\put(2,1.5){\circle*{0.1}}
\put(1.8,1.6){$x$}
\put(1.5,-0.2){$M_{a}^{a}(\varphi)$}
\put(2.6,0.9){\tiny{$\g_{qx}(t(x))$}}
\put(2.61,1.23){\circle*{0.1}}
\put(2.1,1.2){\tiny{$\g_{qx}$}}

\put(2.39,1.33){\vector(-2,1){0.1}}

\linethickness{1mm} 

\put(3,2){\line(1,0){1}}
\put(4,2){\line(1,-1){1}}
\put(3,2){\line(-1,-2){0.5}}
\put(2.5,1){\line(0,-1){1}}
\end{picture}
\caption{An atlas of local charts at an arbitrary point $p\in M_a^a(\varphi)$.}\label{The open $r(K)$-ball}
    \end{center}
  \end{figure}



Clearly, $\alpha_p$ gives a homeomorphism between $B_H(O;r(p))$ and its image in $M^a_a(\varphi)$. Thus the family of maps $\{(B_H(O;r(p)),\alpha_p)\,|\, p\in M^a_a(\varphi)\}$ forms an atlas of $M^a_a(\varphi)$ (see Figure \ref{The open $r(K)$-ball}).

$\qedd$

\end{proof}


\section{Level sets configuration}

We shall give the proofs of Theorems \ref{Th:disconnected} and \ref{Th:ends}. The following
Lemma \ref{Lem:compactlevel} is elementary and useful for our discussion.

\begin{lemma}\label{Lem:compactlevel}
Let $\varphi:(M,F)\to \R$ be a convex function. If $M_{a}^{a}(\varphi)$ is compact, then so is $M_{b}^{b}(\varphi)$ for all $b\geq a$. If $M_{a}^{a}(\varphi)$ is non-compact, then so is $M_{b}^{b}(\varphi)$ for all $b\leq a$. 
\end{lemma}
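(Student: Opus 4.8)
The two assertions are contrapositive to one another: proving that $M_a^a(\varphi)$ compact forces $M_b^b(\varphi)$ compact for every $b\ge a$ is equivalent, after relabelling the levels, to the statement that $M_a^a(\varphi)$ non-compact forces $M_b^b(\varphi)$ non-compact for every $b\le a$. I would therefore only prove the first implication; the case $b=a$ is trivial, so fix $b>a$. Since $\varphi$ is continuous by Proposition \ref{Prop:Lipschitz}, the level $M_b^b(\varphi)=\varphi^{-1}(b)$ is closed, and by the forward Hopf--Rinow theorem it is enough to exhibit a point $o$ and a radius $R<\infty$ with $M_b^b(\varphi)\subset B(o,R)$, i.e. to bound the forward distance $d(o,q)$ uniformly over $q\in M_b^b(\varphi)$. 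Note that if $M_a^a(\varphi)=\emptyset$ there is nothing to prove, and if $M_a^a(\varphi)\ne\emptyset$ with $\inf_M\varphi=a$ then the infimum is attained and $M_a^a(\varphi)$ is the (compact) minimum set; otherwise $\inf_M\varphi<a$.

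The principal case is $\inf_M\varphi<a$. Here I would fix a point $o$ with $c:=\varphi(o)<a$ and, for an arbitrary $q\in M_b^b(\varphi)$, take a minimizing geodesic $\gamma\colon[0,\ell]\to M$ from $o$ to $q$, where $\ell=d(o,q)$. Along $\gamma$ the function $\varphi\circ\gamma$ is convex and runs from $c$ to $b$; since $c<a<b$ it meets the value $a$ at some interior parameter $t_0$, and $\gamma(t_0)\in M_a^a(\varphi)$ with $t_0=d(o,\gamma(t_0))$ because a subarc of a minimizing geodesic is minimizing. The slope inequality for the convex function $\varphi\circ\gamma$ applied to the three parameters $0<t_0<\ell$ gives $\tfrac{a-c}{t_0}\le\tfrac{b-a}{\ell-t_0}$, which rearranges to $\ell\le\tfrac{b-c}{a-c}\,t_0$. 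As $M_a^a(\varphi)$ is compact, $R_0:=\max_{p\in M_a^a(\varphi)}d(o,p)$ is finite, so $t_0\le R_0$ and hence $d(o,q)\le\tfrac{b-c}{a-c}R_0$ uniformly in $q$. Thus $M_b^b(\varphi)$ is forward bounded and therefore compact.

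The remaining case $\inf_M\varphi=a$ cannot be treated this way, since the bound above degenerates as $c\uparrow a$; here I would argue by contradiction with a limiting ray. Suppose $M_b^b(\varphi)$ is non-compact. Being closed and $M$ forward complete, it is forward unbounded, so there are $q_i\in M_b^b(\varphi)$ with $d(o,q_i)\to\infty$ for a fixed $o\in M_a^a(\varphi)$. Let $\gamma_i\colon[0,\ell_i]\to M$ be minimizing geodesics from $o$ to $q_i$, with $\ell_i\to\infty$. Each $\varphi\circ\gamma_i$ is convex with endpoint values $a$ and $b$, hence bounded above by $b$ on $[0,\ell_i]$. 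The initial directions $\dot\gamma_i(0)$ lie in the compact indicatrix $\Sigma_o$, so after passing to a subsequence they converge to some $u$, and the geodesic $\gamma_u\colon[0,\infty)\to M$ with $\dot\gamma_u(0)=u$ is a minimizing ray (a limit of minimizing geodesics of lengths tending to infinity) satisfying $\varphi(\gamma_u(t))\le b$ for all $t\ge0$. A convex function on $[0,\infty)$ that is bounded above is non-increasing, so $\varphi(\gamma_u(t))\le\varphi(\gamma_u(0))=a$; since $a=\inf_M\varphi$ this forces $\varphi\circ\gamma_u\equiv a$, i.e. $\gamma_u\subset M_a^a(\varphi)$. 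But $\gamma_u$ is a minimizing ray, so $d(o,\gamma_u(t))=t\to\infty$, contradicting the compactness of $M_a^a(\varphi)$. Hence $M_b^b(\varphi)$ is compact.

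The delicate point throughout is the non-symmetry of the Finsler distance: I must work consistently with forward distances and forward balls and invoke forward completeness and the compactness of the indicatrix, and I rely on the remark from the introduction that $\varphi$ is convex along a geodesic together with its reverse curve, so the slope inequalities are valid irrespective of orientation. I expect the main obstacle to be exactly the degenerate case $\inf_M\varphi=a$, where the quantitative slope estimate collapses and must be replaced by the qualitative limiting-ray argument above.
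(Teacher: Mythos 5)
Your argument is correct, and in its main case it takes a genuinely different route from the paper. The paper runs a single limiting-ray argument for all cases: it fixes $p\in M_a^a(\varphi)$, takes a divergent sequence $q_j\in M_b^b(\varphi)$ with minimizing geodesics $\gamma_j$ from $p$, observes that compactness of $M_a^a(\varphi)$ forces each $\varphi\circ\gamma_j$ to be monotone non-decreasing beyond a fixed parameter $L$, and extracts a ray $\gamma_\infty$ along which $\varphi$ is simultaneously non-decreasing and bounded above, hence constant equal to $a$, contradicting compactness of the $a$-level. Your treatment of the case $\inf_M\varphi=a$ is essentially this same limit argument, made cleaner because $a=\inf_M\varphi$ immediately forces the constant value to be $a$ without any monotonicity claim. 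Your treatment of the principal case $\inf_M\varphi<a$, however, replaces the limit argument by a direct quantitative bound: the three-point slope inequality along a minimizing geodesic from a sublevel point $o$ with $\varphi(o)=c<a$ through the $a$-level to $q\in M_b^b(\varphi)$ gives $d(o,q)\le\frac{b-c}{a-c}\max_{p\in M_a^a(\varphi)}d(o,p)$, so the $b$-level sits in a closed forward ball and forward Hopf--Rinow finishes. This buys you more than the paper's proof does: you avoid entirely the delicate step of transferring the eventual monotonicity of the $\varphi\circ\gamma_j$ to the limit function (the point where the paper's write-up is tersest), and you get an explicit diameter-type estimate for free; the price is the case distinction, which is genuinely needed since your slope bound degenerates as $c\uparrow a$. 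One small caveat: when $M_a^a(\varphi)=\emptyset$ the conclusion can actually fail (e.g.\ a positive convex exhaustionless function with non-compact levels and $a$ below its range), so rather than ``nothing to prove'' you should read the lemma, as the paper implicitly does by choosing $p\in M_a^a(\varphi)$, under the standing assumption $a,b\in\varphi(M)$.
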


\begin{proof}
First of all we prove that if $M_{a}^{a}(\varphi)$ is compact, then so is $M_{b}^{b}(\varphi)$ for all $b\geq a$.

Suppose that  $M_{b}^{b}(\varphi)$ is noncompact for some $b>a$. Take a point $p\in M_{a}^{a}(\varphi)$ and a divergent sequence $\{q_{j}\}$, $j=1,2,\dots$ on $M_{b}^{b}(\varphi)$. Since $M_{a}^{a}(\varphi)$ is compact, there is a positive number $L$ such that $d(p,x)<L$ for all $x\in M_{a}^{a}(\varphi)$. Let $\gamma_{j}:[0,d(p,q_{j})]\to M$ be a minimizing geodesic with $\gamma_{j}(0)=p$, 
$\gamma_{j}(d(p,q_{j}))=q$ for $j=1,2,\dots$. 
Compactness of $M_{a}^{a}(\varphi)$ implies that each $\varphi\circ\gamma_j |_{[L,d(p,qj )-L]}$
is monotone and non-decreasing for all large numbers $j$. 

Choosing a subsequence 
$\{\gamma_{i}\}$ of $\{\gamma_{j}\}$ if necessary, we find a ray $\gamma_{\infty}:[0,\infty)\to M$ emanating from $p$ such that $\varphi\circ\gamma_{\infty}$ is monotone, non-decreasing and bounded above, and hence is constant =$a$. This contradicts the assumption that $M_{a}^{a}(\varphi)$ is compact. 

$\qedd$
\end{proof}


The following Proposition \ref{Prop:homeomorphism} is the basic part of the proof of Theorem \ref{Th:homeomorphism}. Under the assumptions in Theorem \ref{Th:homeomorphism}, we divide $M$ into countable compact sets such that $M=\cup_{j=-\infty}^\infty\varphi^{-1}[t_{j-1},t_j]$, where $\{t_j\}$ is monotone increasing and $\lim_{j\to -\infty}t_j=\inf_M\varphi$ (if $\inf_M\varphi$ is not attained) and $\lim_{j\to \infty}t_j=\infty$. In the application of Proposition  \ref{Prop:homeomorphism} to each $\varphi^{-1}[t_{j-1},t_j]$, the undefined numbers $b_{k+1}$ and $b_0$, appearing in the proof of Proposition  \ref{Prop:homeomorphism}, play the role of margin to be pasted with $\varphi^{-1}[t_{j},t_{j+1}]$ (using $b_{k+1}$) and with 
$\varphi^{-1}[t_{j-2},t_{j-1}]$ (using $b_{-1}$), respectively.

\begin{proposition}\label{Prop:homeomorphism}
Let $M_a^a (\varphi) \subset M$ be a connected and compact level set 
and $b > a$ a fixed value. Then there exists a homeomorphism 
$\Phi^b_a:
M_b^b (\varphi) \times [a, b] \to M_a^b (\varphi)$ such that
\begin{equation}
\varphi\circ \Phi_a^b(x, t) = t,\ 
(x, t) \in M_b^b (\varphi) \times [a, b].
\end{equation}
\end{proposition}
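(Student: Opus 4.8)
The plan is to build $\Phi_a^b$ as a level-preserving trivialization by subdividing $[a,b]$ into slabs so thin that each can be straightened inside a single scale of convexity, and then to paste the pieces together along their common levels. First I would record the standing facts. Since $M_a^a(\varphi)$ is compact, Lemma \ref{Lem:compactlevel} makes every level $M_t^t(\varphi)$ with $t\ge a$ compact, and a divergent-sequence/ray-extraction argument exactly as in that lemma shows that the whole slab $M_a^b(\varphi)=\varphi^{-1}([a,b])$ is compact. Since in every application $a>\inf_M\varphi$, Proposition \ref{prop:level} guarantees that each such level is an embedded $(n-1)$-dimensional topological hypersurface, and the discussion around \eqref{eq:tangentcone} attaches to each point $p$ a support hyperplane $H_p\subset T_pM$ and an interior tangent cone $\mathcal{C}_p(M^t(\varphi))$ lying in the half-space $T_p(M^t(\varphi))_+$. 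I would then fix a slightly larger compact set $K=\varphi^{-1}([b_{-1},b_{k+1}])$ with $b_{-1}<a$ and $b_{k+1}>b$ (these extra levels being the margin later used to paste with the adjacent slabs $\varphi^{-1}[t_{j},t_{j+1}]$ and $\varphi^{-1}[t_{j-2},t_{j-1}]$ in the proof of Theorem \ref{Th:homeomorphism}), and take its convexity radius $r(K)>0$.

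Next I would choose a subdivision $a=b_0<b_1<\dots<b_k=b$ fine enough that, by compactness of $K$ and uniform continuity, for each index $i$ and each $x\in M_{b_i}^{b_i}(\varphi)$ a geodesic issuing from $x$ in a direction of the interior cone $\mathcal{C}_x(M^{b_i}(\varphi))$ reaches the level $M_{b_{i-1}}^{b_{i-1}}(\varphi)$ while staying inside a single convexity ball $B(x;r(K))$. On such a ball the sublevel set is convex; since the chosen direction points into the interior of the sublevel, $\varphi$ is strictly decreasing along the geodesic on a neighbourhood of $x$, and convexity of $\varphi\circ\gamma$ keeps it strictly decreasing until its minimum, so the geodesic meets $M_{b_{i-1}}^{b_{i-1}}(\varphi)$ on its strictly decreasing branch (local non-constancy of $\varphi$ forbidding a horizontal segment before that value is reached). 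Hence arclength and the value of $\varphi$ correspond homeomorphically along each such geodesic. Using the charts $\alpha_p$ produced in Proposition \ref{prop:level} together with a partition of unity on the compact hypersurface $M_{b_i}^{b_i}(\varphi)$, I would assemble a continuous field of these descending geodesics and, reparametrizing each by the value of $\varphi$, obtain a level-preserving homeomorphism $\varphi^{-1}([b_{i-1},b_i])\cong M_{b_i}^{b_i}(\varphi)\times[b_{i-1},b_i]$.

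Finally I would paste the thin-slab homeomorphisms: the top face $M_{b_i}^{b_i}(\varphi)\times\{b_i\}$ of the $i$-th slab is identified with the bottom face of the $(i{+}1)$-st along the common level $M_{b_i}^{b_i}(\varphi)$, and the restrictions of the slab homeomorphisms to their faces furnish identifications $M_{b_i}^{b_i}(\varphi)\cong M_b^b(\varphi)$; telescoping these lets me express the whole family over the single fibre $M_b^b(\varphi)$ and yields $\Phi_a^b:M_b^b(\varphi)\times[a,b]\to M_a^b(\varphi)$ with $\varphi\circ\Phi_a^b(x,t)=t$, the construction being carried out over the full range $[b_{-1},b_{k+1}]$ so as to leave the margin needed for the later pasting. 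I expect the main obstacle to be the continuous, uniformly transversal choice of the descending geodesic field over a merely topological (a priori non-smooth) level set: the support hyperplane $H_p$ and the interior cone $\mathcal{C}_p$ of \eqref{eq:tangentcone} vary only semicontinuously where $\varphi$ fails to be differentiable, so the delicate point is to combine compactness of $K$, the convexity-radius bound $r(K)>0$, and the local non-constancy of $\varphi$ to guarantee uniform transversality and strict monotone descent on the entire slab, and thereby that the assembled map is simultaneously injective, surjective, and bicontinuous.
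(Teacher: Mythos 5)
Your overall skeleton (subdivide $[a,b]$ into thin slabs, straighten each slab by a field of strictly descending geodesics parametrized by the value of $\varphi$, then paste along common levels, keeping a margin $b_{-1},b_{k}$ for the later gluing in Theorem \ref{Th:homeomorphism}) matches the paper's. But the step you yourself single out as ``the main obstacle'' --- producing a \emph{continuous, uniformly transversal} field of descending geodesics over a merely topological level set --- is exactly the point the proof must settle, and your proposed mechanism does not settle it. Averaging local sections of the interior cone $\mathcal{C}_x$ with a partition of unity requires (i) that the cone field $x\mapsto\mathcal{C}_x$ admit local continuous sections whose convex combinations again lie in $\mathcal{C}_x$, and (ii) a uniform positive lower bound on the rate of decrease of $\varphi$ along the resulting geodesics. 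Neither is available as stated: the support hyperplane $H_p$ of \eqref{eq:tangentcone} varies only semicontinuously, $\exp_x^{-1}$ of a convex set is star-shaped but not convex in $T_xM$ (so convex combinations of admissible directions need not be admissible), and without a uniform slope bound you cannot guarantee that each descending geodesic actually reaches the next level within one convexity ball, nor that the assembled map is injective (simple covering) and bicontinuous.

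The paper circumvents all of this with a single device you do not use: instead of choosing directions in the tangent cone, it takes for each $p_j'\in\varphi^{-1}(\{a_{j+1}\})$ the minimizing geodesic $T(p_j',q_j)$ realizing the distance $d\bigl(p_j',\varphi^{-1}(-\infty,a_{j-1}]\bigr)$ to a sublevel set \emph{two} subdivision steps below, and uses only its middle portion between the levels $b_{j-1}$ and $b_j$. This foot-point construction is canonical (no selection or averaging needed), the family of such segments simply covers the slab $\varphi^{-1}[b_{j-1},b_j]$, and the slope inequality applied along $T(p_0',q_0)$ yields the uniform bound $-\Delta_a^b(K)$ on the slopes of $\varphi\circ T(p_j',q_j)$, which is what makes the broken geodesic $T(p_k)$ strictly decreasing in $\varphi$ and meets each intermediate level exactly once. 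To repair your argument you would need either to import this distance-to-sublevel construction or to supply an independent proof of the existence of a continuous selection with a uniform negative slope; as written, the proposal identifies the difficulty but does not resolve it.
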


\begin{proof}
Let $K\subset M$ be a compact set with $M^b_a(\varphi)\subset{\rm Int}(K)$ and $r:=r(K)$ the convexity radius over $K$. We define two divisions as follows. 
Let $a=a_0<a_1<\cdots<a_k=b$ and $b_{-1}<b_0<\cdots<b_k$ be given such that $\varphi^{-1}[b_{-1},b_k]\subset {\rm Int}(K)$ and
   \begin{enumerate}
 \item  $b_{-1}<a_0<b_0<a_1<\cdots <a_{k-1}<b_{k-1}<a_k=b<b_k$,   \\
 \item  $b_j:=\frac{a_j+a_{j+1}}{2},\quad j=0,1,\cdots,k-1$,   \\
 \item  $\varphi^{-1}(\{a_{j-1}\})\subset\bigcup\,\{B(x,r) \,|\, x\in\varphi^{-1}(\{a_{j+1}\})\}$,
  $\;j=1,\cdots,k-1$ 
  \item $\varphi^{-1}(\{b_{-1}\})\subset\bigcup\, \{B(y,r)\, |\, y\in\varphi^{-1}(\{a_1\})\}$,  \\
  \item  $\varphi^{-1}(\{a_{k-1}\})\subset\bigcup\, \{B(z,r)\, |\, z\in\varphi^{-1}(\{b_k\})\}$.
   \end{enumerate}
Obviously we have $[a,b]\subset [b_{-1},b_k]$.

For an arbitrary fixed point $p'_j\in\varphi^{-1}(\{a_{j+1}\})$, we have a minimizing geodesic $T(p'_j,q_j)$ realizing the distance $d(p'_j,\varphi^{-1}(-\infty,a_{j-1}])$ and $q_j$ the foot of $p'_j$ on $\varphi^{-1}(-\infty,a_{j-1}]$. Then the family of all such minimizing geodesics emanating from all the points on $\varphi^{-1}(\{a_{j+1}\})$ to the points on $\varphi^{-1}(\{a_{j-1}\})$ simply covers the set $\varphi^{-1}[b_{j-1},b_j]$, $j=1,2,\cdots,k$. We define $p_j:=T(p_j',q_j)\cap\varphi^{-1}(\{b_j\})$ and $p_{j-1}:=T(p_j',q_j)\cap\varphi^{-1}(\{b_{j-1})$. Once the point $p_{j-1}$ has been defined, we then choose $p_{j-1}'\in\varphi^{-1}(\{a_j\})$ and $q_{j-1}\in\varphi^{-1}(\{a_{j-2}\})$ in such a way that $T(p_{j-1}', q_{j-1})$ realizes the distance  $d(p_{j-1}',q_{j-1})=d(p_{j-1},\varphi^{-1}(-\infty,a_{j-2}]) $ and it contains $p_{j-1}$ in its interior. We thus obtain the inductive construction of a sequence $\{T(p_j',q_j) \,|\, j=1,\cdots,k\}$ of minimizing geodesics. 


\bigskip


\setlength{\unitlength}{1cm} 
  \begin{figure}[h]
    \begin{center}
\begin{picture}(6, 12)
\thicklines
\qbezier[40](-1,0)(3,1)(7,0)
\qbezier[40](-1,2)(3,3)(7,2)
\qbezier[40](-1,7)(3,8)(7,7)
\qbezier[40](-1,9)(3,10)(7,9)
\qbezier[40](-1,11)(3,12)(7,11)

\thinlines
\qbezier(-1,1)(3,2)(7,1)
\qbezier(-1,3)(3,4)(7,3)


\qbezier(-1,6)(3,7)(7,6)
\qbezier(-1,8)(3,9)(7,8)
\qbezier(-1,10)(3,11)(7,10)

\put(7.2,-0.1){$\varphi^{-1}(b_{-1})$}
\put(7.2,0.9){$\varphi^{-1}(a_{0})=\varphi^{-1}(a)$}
\put(7.2,1.9){$\varphi^{-1}(b_{0})$}
\put(7.2,2.9){$\varphi^{-1}(a_{1})$}
\put(7.2,5.9){$\varphi^{-1}(a_{k-2})$}
\put(7.2,6.9){$\varphi^{-1}(b_{k-2})$}
\put(7.2,7.9){$\varphi^{-1}(a_{k-1})$}
\put(7.2,8.9){$\varphi^{-1}(b_{k-1})$}
\put(7.2,9.9){$\varphi^{-1}(a_{k})=\varphi^{-1}(b)$}
\put(7.2,10.9){$\varphi^{-1}(b_{k})$}

\qbezier(3,0.5)(3,3)(4.5,3.45)
\qbezier(3.5,1.5)(3,4)(3.5,4.8)
\put(3.4,5){$\vdots$}
\qbezier(3.5,5.5)(3,8)(4.5,8.45)
\qbezier(3.8,6.5)(3,9)(4,10.45)
\qbezier(4,8.45)(3,10)(4,11.45)

\put(4,11.7){$p_k$}
\put(4,10.7){$p_{k-1}'$}
\put(2.5,9.7){$p_{k-1}$}
\put(4.5,8.7){$p_{k-2}'$}
\put(2.5,7.7){$p_{k-2}$}
\put(3.6,8.2){$q_{k}$}
\put(2.5,5.5){$p_{k-3}$}
\put(4,6.2){$q_{k-1}$}
\put(3,4.7){$p_1$}
\put(4.3,3.7){$p_0'$}
\put(2.85,2.7){$p_0$}
\put(3.6,1.2){$q_1$}
\put(2.5,1.7){$q_0'$}
\put(2.5,0.7){$q_0$}



\linethickness{0.5mm} 
\qbezier(3.56,9.45)(3.3,10.5)(4,11.45)
\qbezier(3.52,7.5)(3.3,8.5)(3.56,9.45)
\qbezier(3.52,5.5)(3.2,6.5)(3.52,7.5)
\qbezier(3.35,2.45)(3.1,3.8)(3.5,4.8)

\end{picture}
\caption{The broken geodesic $T(p_k)$.}\label{The broken geodesic }
    \end{center}
  \end{figure}
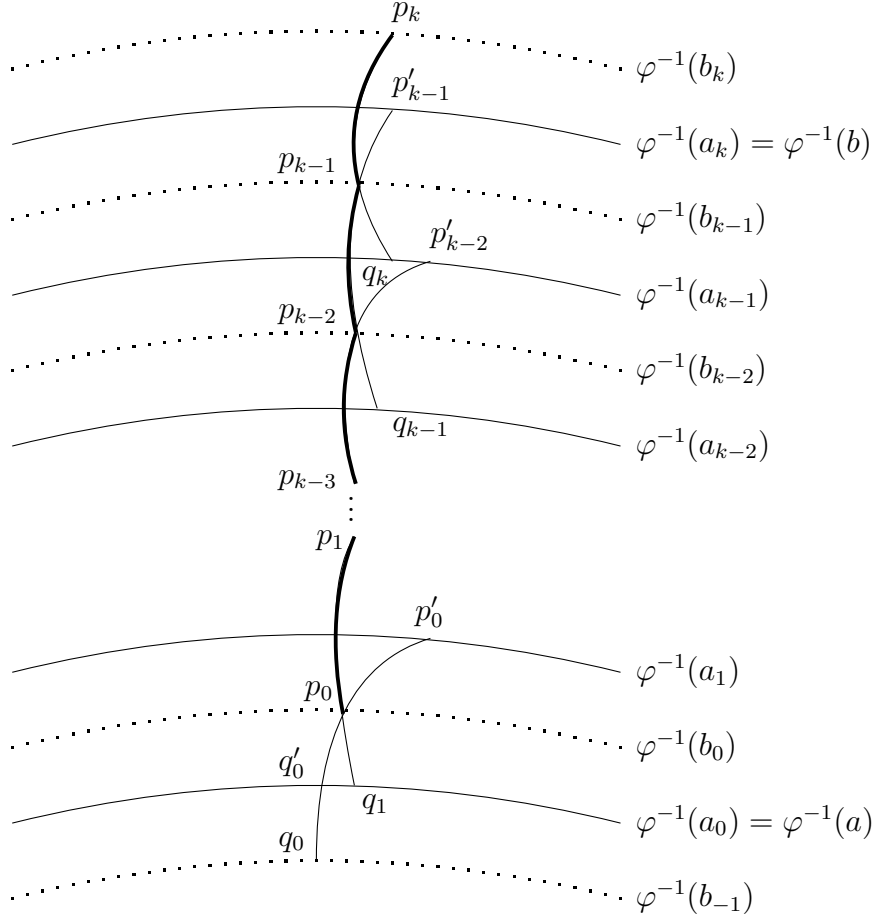

We finally choose a point $p_0'\in\varphi^{-1}(\{a_1\})$ and $q_0\in\varphi^{-1}(\{b_{-1}\})$ such that $T(p_0',q_0)$ is a unique minimizing geodesic with $q_0$ being the foot of $p_0'$ on $\varphi^{-1}(-\infty,b_{-1}]$. If we set $q_0':=\varphi^{-1}(\{a_0\})\cap T(p_0',q_0)$, then $d(p_0,q_1)\le d(p_0,q_0')$ follows from the fact that $q_1$ is the foot of $p_0$ on $\varphi^{-1}(-\infty,a_0]$. Therefore the slope inequality along $T(p_0',q_0)$ implies 
     \[        \frac{a_0-b_0}{d(p_0,q_1)}\le\frac{a_0-b_0}{d(p_0,q_0')}\le\frac{b_{-1}-a_0}{d(q_0',q_0)},   \]
and hence there exists a positive number 
         \[   
  \Delta^b_a(K):=\min\{\frac{a-b_{-1}}{d(q_0',\varphi^{-1}(-\infty,b_{-1}])}\; |\; q_0'\in\varphi^{-1}(\{a\})\}
          \]
with the property that all the slopes of $\varphi\circ T(p_j',q_j)$ for every $j=0,1,\cdots,k$ are negative and bounded above by $-\Delta^b_a(K)$.\par

We define a broken geodesic $T(p_k):=T(p_k,p_{k-1})\cup \cdots\cup T(p_1,p_0)$, $p_k\in\varphi^{-1}(\{b_k\})$, with its break points at $p_j\in\varphi^{-1}(\{b_j\})$, $j=0,1,\cdots,k-1$ in such a way that each $T(p_j,p_{j-1})$ is a proper subarc of a unique minimizing geodesic $T(p_j',q_j)$, where $q_j$ is the foot of $p_j'$ on $\varphi^{-1}(-\infty,a_{j-1}]$ (see Figure \ref{The broken geodesic }). Then $T(p_{j-1},p_{j-2})$ is a proper subarc of $T(p'_{j-1},q_{j-1})$. 
Clearly, the convex function along $T(p_k)$ is monotone strictly decreasing, since
the slopes along $\varphi\circ T(p_k)$ are all bounded above by $-\Delta^b_a(K)$. We then observe from the construction that the family of all the broken geodesics emanating from all points on $\varphi^{-1}(\{b_k\})$ and ending at points on 
$\varphi^{-1}(\{b_{-1}\})$ simply covers $\varphi^{-1}[a,b]$.  The desired homeomorphism $\Phi^b_a$ is now obtained by defining $\Phi^b_a(x,t)$ as the intersection of a $T(x)$ emanating from $x$ :  $\Phi^b_a(x,t)=T(x)\cap \varphi^{-1}(\{t\})$.    
$\qedd$
 \end{proof}
\begin{proposition}\label{Prop:diameter}
{\rm
Assume that all the levels of $\varphi$ are compact. Then the diameter function
$\delta:\varphi(M)\to\R$ defined by
   \[      \delta(a):=\sup\{d(x,y) \,|\, x,y\in\varphi^{-1}(\{a\}),\quad a\in\varphi(M)     \]
is locally Lipschitz.
}
\end{proposition}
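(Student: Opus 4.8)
The plan is to produce, on each compact band $\varphi^{-1}[a,b]$, an explicit Lipschitz constant for $\delta$; since every point of $\varphi(M)$ has a neighbourhood of this form (two-sided in the interior, one-sided at an endpoint of $\varphi(M)$), this yields the local Lipschitz property. Fix $a<b$ in $\varphi(M)$ and set $K:=\varphi^{-1}[a,b]$; by the broken-geodesic covering constructed in the proof of Proposition~\ref{Prop:homeomorphism} this band is parametrised by the compact level $M_b^b(\varphi)$ together with the interval $[a,b]$, hence is compact. Write $\lambda:=\lambda(K)$ for the reversibility constant of $K$ and $\Delta:=\Delta_a^b(K)>0$ for the uniform slope bound produced in that same proof. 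Recall that $\varphi^{-1}[a,b]$ is simply covered by a family of descending broken geodesics, along each of which $\varphi$ is strictly decreasing with forward slope at most $-\Delta$. I will show that $\delta$ is Lipschitz on $[a,b]$ with constant $(1+\lambda)/\Delta$.

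The crucial estimate is the cost, measured in $d$, of sliding between two levels along these curves. Let $s,t\in[a,b]$ with $s<t$, and let $x\in\varphi^{-1}(\{t\})$. Following the broken geodesic through $x$ in its forward (descending) direction until it meets $\varphi^{-1}(\{s\})$ at a point $x_s$, the slope bound integrates to show that the traversed subarc has length at most $(t-s)/\Delta$. Since this subarc is a forward curve from $x$ to $x_s$ inside $K$, we get $d(x,x_s)\le (t-s)/\Delta$, and by reversibility on $K$ the reverse (ascending) distance satisfies $d(x_s,x)\le \lambda(t-s)/\Delta$. The identical statement holds starting from a point of $\varphi^{-1}(\{s\})$ and ascending along its broken geodesic to $\varphi^{-1}(\{t\})$.

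Both halves of the Lipschitz estimate now follow from the non-symmetric triangle inequality. Because the levels are compact, pick $x,y\in\varphi^{-1}(\{t\})$ with $d(x,y)=\delta(t)$ and let $x_s,y_s$ be their descending images on $\varphi^{-1}(\{s\})$. Then $d(x,y)\le d(x,x_s)+d(x_s,y_s)+d(y_s,y)$, where $d(x,x_s)\le (t-s)/\Delta$ (descending) and $d(y_s,y)\le \lambda(t-s)/\Delta$ (ascending), so
\[
\delta(s)\ge d(x_s,y_s)\ge \delta(t)-\frac{1+\lambda}{\Delta}\,(t-s).
\]
Symmetrically, taking $x',y'\in\varphi^{-1}(\{s\})$ with $d(x',y')=\delta(s)$ and their ascending images $x'_t,y'_t$ on $\varphi^{-1}(\{t\})$, the inequality $d(x',y')\le d(x',x'_t)+d(x'_t,y'_t)+d(y'_t,y')$ with $d(x',x'_t)\le \lambda(t-s)/\Delta$ and $d(y'_t,y')\le (t-s)/\Delta$ gives $\delta(t)\ge \delta(s)-\frac{1+\lambda}{\Delta}(t-s)$. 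Combining the two bounds yields $|\delta(t)-\delta(s)|\le \frac{1+\lambda}{\Delta}(t-s)$ for all $s,t\in[a,b]$.

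I expect the principal difficulty to be the bookkeeping imposed by the asymmetry of the Finsler distance: each triangle inequality and each use of the slope bound must be oriented so that descending motions are controlled directly by $(t-s)/\Delta$ while ascending motions are charged the reversibility factor $\lambda(K)$, which is precisely how the constant $(1+\lambda)/\Delta$ arises. The only other points requiring care are the genuine positivity and uniformity of $\Delta_a^b(K)$ over the whole band, both inherited verbatim from the proof of Proposition~\ref{Prop:homeomorphism}, and, at an endpoint of $\varphi(M)$ such as an attained $\inf_M\varphi$, the use of the corresponding one-sided band in place of $[a,b]$.
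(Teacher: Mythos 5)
Your proposal is correct and follows essentially the same route as the paper: both use the broken-geodesic covering and the uniform slope bound $\Delta^b_a$ from Proposition~\ref{Prop:homeomorphism} to slide a diameter-realizing pair between the levels $s$ and $t$ at cost at most $(1+\lambda)|t-s|/\Delta^b_a$, charging the reversibility constant $\lambda$ exactly on the ascending legs of the triangle inequality. The resulting Lipschitz constant $(1+\lambda)/\Delta^b_a$ coincides with the paper's.
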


\begin{proof}
Let $\inf_M\varphi<a<b<\infty$ and $r=r(M^b_a(\varphi))$ the convexity radius over $M^b_a(\varphi)$. Let $x,y\in\varphi^{-1}(\{s\})$ for $s\in[a,b)$ be such that $d(x,y)=\delta(s)$. Proposition \ref{Prop:homeomorphism} then implies that there are points $x', y'\in M^b_b(\varphi)$ such that $\Phi^b_a(x',s)=x$ and $\Phi^b_a(y',s)=y$. Moreover 
we have $\Phi^b_a(x',t)=T(x')\cap M^t_t(\varphi)$ and $\Phi^b_a(y',t)=T(y')\cap M^t_t(\varphi)$ and the length $L(T(p)|_{[s,t]})$ of $T(p)|_{[s,t]}$, for $p\in M_b^b(\varphi)$ and 
 for every $a\le s<t\le b$, is bounded above by 
     \begin{equation}\label{eq:length}
          L(T(p)|_{[s,t]})\le |t-s|/\Delta^b_a(M^b_a(\varphi)).        
      \end{equation}    
We therefore have by setting $\lambda=\lambda(M^b_a(\varphi))$ the reversibility constant on $M^b_a(\varphi)$, 
 \begin{align*}
     \delta(s) &=d(x,y)\le d(x,\Phi^b_a(x',t))+d(\Phi^b_a(x',t),\varphi^b_a(y',t))+d(\Phi^b_a(y',t),y)  \\
     &\le \lambda|t-s|/\Delta^b_a(M^b_a(\varphi))+\delta(t)+|t-s|/\Delta^b_a(M^b_a(\varphi))  \\
     &=(1+\lambda)|t-s|/\Delta^b_a(M^b_a(\varphi))+\delta(t).
   \end{align*}
Similarly, we obtain by choosing $x,y\in\varphi^{-1}(\{t\})$, $d(x,y)=\delta(t)$,
    \begin{align*}
       \delta(t) &=d(x,y)\le d(x,\Phi^b_a(x',s))+d(\Phi^b_a(x',s),\Phi^b_a(y',s))+d(\Phi^b_a(y',s),y)  \\
     &\le (1+\lambda)|t-s|/\Delta^b_a(M^b_a(\varphi))+\delta(s),
     \end{align*}
and hence,
   \[     |\delta(t)-\delta(s)|\le(1+\lambda)|t-s|/\Delta^b_a(M^b_a(\varphi)).    \]
$\qedd$
\end{proof}
\medskip\noindent
{\it The proof of Theorem~\ref{Th:homeomorphism}}.\par
We first assume that  $\inf_M\varphi$ is not attained. Let $\{a_j\}_{j=0,\pm 1,\cdots}$ be a monotone increasing sequence of real numbers with $\lim_{j\to -\infty}a_j=\inf_M\varphi$ and $\lim_{j\to\infty}a_j=\infty$.  We then apply Proposition \ref{Prop:homeomorphism} to each integer $j$ and obtain a homeomorphism $\Phi^{j+1}_j:\varphi^{-1}(\{a_{j+1}\})\times(a_j,a_{j+1}] \to M_{a_j}^{a_{j+1}}$ such that 
 \[ \varphi\circ\Phi^{j+1}_j(x,t)=t,\quad x\in\varphi^{-1}(\{a_{j+1}\}),\quad t\in(a_j,a_(j+1] \]

The composition of these homeomorphisms gives the desired homeomorphisms
$\varphi:\varphi^{-1}(\{a\})\times(\inf_M\varphi,\infty)\to M$.\par
If $\lambda:=\inf_M\varphi$ is attained, then $M^\lambda_\lambda(\varphi)$ is a $k$-dimensional totally geodesic submanifold which is totally convex and $0\le k\le \dim M-1$. A tubular neighborhood $B(M^\lambda_\lambda(\varphi),r(M^\lambda_\lambda(\varphi)))$ around the minimum set is a normal bundle over $M^\lambda_\lambda(\varphi)$ in $M$ and its boundary $\partial B(M^\lambda_\lambda(\varphi),r(M^\lambda_\lambda(\varphi)))$ is homeomorphic to a level of $\varphi$. Therefore $M$ is homeomorphic to the normal bundle over the minimum set in $M$. This proves Theorem 
~\ref{Th:homeomorphism}.\par\medskip
$\qedd$
\begin{remark}
{\rm
Under the assumption in Theorem \ref{Th:homeomorphism}, it is not certain
whether or not $\lim_{t\to\inf_M\varphi}\delta(t)=\infty$. It might happen that every level
set above infimum is compact but the minimum set is non-compact. We do not know such an example on a Finsler manifold. \par
}
\end{remark}
\begin{remark}\label{Rem:important}
{\rm
The basic difference of treatments of convex functions between Riemannian and Finsler geometry can be interpreted as follows.\par
In the case where $\varphi:(M,g)\to\R$ is a convex function with non-compact levels, 
the homeomorphism $\Phi^b_a:M^b_b(\varphi)\times[a,b]\to M^b_a(\varphi)$ is obtained as follows. Fix a point $p\in M^a_a(\varphi)$ and a sequence of $R_j$-balls centered at $p$ : $\{B(p,R_j) \,|\, j=1,2,\cdots\}$ with $\lim_{j\to\infty}R_j=\infty$. Setting 
$K_j$ for $j=1,2\cdots$ the closure of $B(p,R_j)$, we find a sequence of constants $\Delta_j:=\Delta^b_a(K_j)$. If $x\in K_j\cap M^b_b(\varphi)$ is a fixed point, we then have a broken geodesic $T(x):=T(x_k,x_{k-1})\cup\cdots\cup T(x_1,x_0)$ as obtained in the proof of Proposition \ref{Prop:homeomorphism}, where $x_0\in M^a_a(\varphi)$. The special properties of Riemannian distance function now applies to $T(x_j,x_{j-1}):[0,d(x_j,x_{j-1})]\to (M,g)$ to obtain that the distance function from $p\in M_a^a(\varphi)$, 
$t\mapsto d(p,T(x_j,x_{j-1}))(t)$ is strictly monotone decreasing. Here $T(x_j,x_{j-1})$ is parameterized by arc-length such that $T(x_j,x_{j-1})(0)=x_j$ and $T(x_j,x_{j-1})(d(x_j,x_{j-1}))=x_{j-1}$. Therefore we observe that $T(x)$ is contained entirely in $K_j$ and moreover the length 
 $L(T(x))$ of $T(x)$ satisfies
      \[       L(T(x))\le(b-a)/\Delta_j,\quad \forall x\in K_j\cap M^b_a(\varphi).     \] 
If $y_0\in M^a_a(\varphi)\cap K_j$ is an arbitrary fixed point, Proposition \ref{Prop:homeomorphism} again implies that there exists a point $y=y_m\in M^b_b(\varphi)$ such that $T(y)=T(y_k,y_{k-1})\cup \cdots \cup T(y_1,y_0)$ has length at most $(b-a)/\Delta_j$. Therefore we have 
    \[      d(p,y)< R_j+(b-a)/\Delta_j+1.      \]
We therefore observe that the correspondence between $M^b_b(\varphi)$ and $M^a_a(\varphi)$, $x\mapsto x_0$ through $T(x)$ is bijective, and the desired homeomorphism is constructed.\par
However in the Finslerian  case where all the levels of a convex function $\varphi:(M,F)\to\R$ are non-compact, the correspondence between $M^b_b(\varphi)$ and $M^a_a(\varphi)$, $x\mapsto x_0$ through $T(x)=T(x_k,x_{k-1})\cup\cdots \cup T(x_1,x_0)$ may not be obtained. In fact, the monotone decreasing property of $t\mapsto d(p,T(x_j,x_{j-1})$ might not hold for a Finsler metric. Therefore $T(x)$ for a point $x\in K_j\cap M^b_b(\varphi)$ may not necessarily be contained in $K_j$, and hence, we may fail in controlling the length of $T(x)$ in terms of $\Delta_j$. By the same reason, we cannot prove the monotone non-decreasing property of the diameter function for compact levels of a convex function $\varphi:(M,F)\to\R$.       
}
\end{remark}

\bigskip
\section{Proof of Theorem \ref{Th:disconnected}}
We take a minimizing geodesic $\sigma:[0,\ell]\to M$ such that $\sigma(0)$ and $\sigma(\ell)$ belong to  distinct components of $M^c_c(\varphi)$. \par\medskip
For the proof of (1), we assert that $\inf_M\varphi=\inf_{0\le t\le\ell}\varphi\circ\sigma(t)$. Suppose $b:=\inf_{0\le t\le\ell}\varphi\circ\sigma(t)>\inf_M\varphi$. Since $\varphi$ is locally non-constant, we may assume without loss of generality that $b:=\inf_{0\le t\le\ell}\varphi\circ\sigma(t)$ is attained at a unique point, say, $q=\sigma(\ell_0)$.

 Setting $r=r(\sigma(\ell_0))$, we find a number $a\in(\inf_M\varphi,b)$ such that there is a unique foot $p\in M^a_a(\varphi)$ of $q$ on $M^a_a(\varphi)$, namely $d(\sigma(\ell_0),M^a_a(\varphi))=d(\sigma(\ell_0),p)$.

 Let $\alpha:[0,d(q,p)]\to M$ be a unique minimizing geodesic with $\alpha(0)=q$, $\alpha(d(q,p))=p$. The points on $\alpha(t),\,0\le t\le d(q,p)$ can be joined to $q_\pm:=\sigma(\ell_0\pm r)$ by a unique minimizing geodesic $\g_{\alpha(t)q_\pm}:[0,d(\alpha(t),q_\pm)]\to B(q;r)$ with $\gamma_{\alpha(t)q_\pm}(0)=\alpha(t)$, 
 $\gamma_{\alpha(t)q_\pm}(d(\alpha(t)),q_\pm)=q_\pm$.

 Since $\varphi(q_\pm)>b$, the right hand derivative of $\varphi\circ\g_{\alpha(t)q_\pm}$ at $d(\alpha(t),q_\pm)$ is bounded below by 
    \[        (\varphi\circ\g_{\alpha(t)q_\pm})'_+(\varphi\circ
\gamma_{\alpha(t)q_\pm}(d(\alpha(t)),q_\pm))
>\frac{\varphi(q_{\pm})-b}{2r}>0.      \]
Thus, 
for every $t\in[0,d(q,p)]$,
 $\g_{\alpha(t)q_\pm}$
 meets $M^c_c(\varphi)$ at  
$\g_{\alpha(t)q_\pm}(u^\pm(t))$ with
     \[       u^\pm(t)\le \frac{2r(c-a)}{\varphi(q_\pm)-b}+2r,       \]
and hence there are curves $C^\pm_0:[0,d(q,p)]\to M^c_c(\varphi)$ with $C^+_0(0)=\sigma(\ell)$, $C^-_0(0)=\sigma(0)$ and $C^+_0(d(q,p))=\g_{pq_+}(u^+(d(q,p)))$, $C^-_0(d(q,p))=\g_{pq_-}(u^-(d(q,p)))$. Let $\tau_t:[0,d(p,\sigma(t))]\to M$ for $t\in[\ell_0-r,\ell_0+r]$ be a minimizing geodesic with $\tau_t(0)=p$, $\tau_t(d(p,\sigma(t))=\sigma(t)$.  Every $\tau_t$ meets $M^c_c(\varphi)$ at a parameter value $\le (2rc)/(b-a)$, and hence we have a curve $C_1:[\ell_0-r,\ell_0+r]\to M^c_c(\varphi)$ such that 
     \[       C_1(t)=\tau_t[0,\frac{2rc}{b-a}]\cap M^c_c(\varphi).      \]
Thus, considering the union  $C_0^-\cup C_1\cup (C_0^+)^{-1}$, it follows that $\sigma(0)$ can be joined to $\sigma(\ell)$ in $M^c_c(\varphi)$, a contradiction. This proves (1) (see Figure \ref{The proof of Theorem 1.1.}).

\bigskip


\setlength{\unitlength}{1.5cm} 
 \begin{figure}[h]
    \begin{center}
\begin{picture}(6, 7)
\put(3,2){\circle*{0.1}}
\put(2.5,2.2){$q=\sigma(l_0)$}
\put(3,0.5){\circle*{0.1}}
\put(2.3,0.25){\footnotesize ${p=\alpha(d(q,p))}$}
\put(3,4){$B(q,r)$}

\qbezier(3,0.5)(1.7,2)(1.4,3)

\qbezier(1.4,3)(1,4)(0.47,6.5)
\put(0.47,6.5){\circle*{0.1}}
\put(0.6,6.4){$C_{0}^{-}(d(q,p))=C_{1}(l_{0}-r)$}

\qbezier(4.9,2.0)(4.9,2.787)(4.3435,3.3435) 
\qbezier(4.3435,3.3435)(3.787,3.9)(3.0,3.9) 
\qbezier(3.0,3.9)(2.213,3.9)(1.6565,3.3435) 
\qbezier(1.6565,3.3435)(1.1,2.787)(1.1,2.0) 
\qbezier(1.1,2.0)(1.1,1.213)(1.6565,0.6565) 
\qbezier(1.6565,0.6565)(2.213,0.1)(3.0,0.1) 
\qbezier(3.0,0.1)(3.787,0.1)(4.3435,0.6565) 
\qbezier(4.3435,0.6565)(4.9,1.213)(4.9,2.0)

\qbezier(-0.5,1.5)(3,2.5)(6.5,1.5)
\put(6.5,1.2){$M_b^b(\varphi)$}

\qbezier(-0.5,0)(3,1)(6.5,0)
\put(6.5,-0.4){$M_a^a(\varphi)$}
\put(6.5,4.4){$M_c^c(\varphi)$}
\put(-1.5,4.4){$M_c^c(\varphi)$}
\put(6.2,5.7){$\sigma(l)=C_{0}^{+}(0)$}
\put(6,5.8){\circle*{0.1}}
\put(5.6,6.6){\circle*{0.1}}
\put(5.8,6.5){$C_0^{+}(t)$}
\put(-1.7,5.5){$\sigma(0)=C_{0}^{-}(0)$}

\put(0.5,4.4){\vector(1,-2){0.01}}
\put(5.37,4.4){\vector(1,2){0.01}}
\put(5.04,4.4){\vector(1,3){0.01}}
\put(3.6,1.65){\vector(1,2){0.01}}

\qbezier(-1.5,5)(0.5,5)(0.5,7)
\qbezier(5.5,7)(6,5)(7.5,5)
\qbezier(0,5.5)(3,-1.6)(6,5.8)
\put(0,5.5){\circle*{0.1}}

\qbezier(4.55,3.1)(5.5,5.5)(5.6,6.6)
\put(4.55,3.1){\circle*{0.1}}
\put(4.8,3.1){$\sigma(l_0+r)=q_+$}
\put(1.4,3){\circle*{0.1}}
\put(-0.5,2.9){$q_-=\sigma(l_0-r)$}
\put(0.93,4.5){\vector(-1,3){0.01}}

\put(2.195,1.5){\vector(-1,1){0.01}}

\qbezier(3,1)(4,2)(4.55,3.1)

\qbezier(3,0.5)(3,1)(3,2)
\put(3,1.5){\vector(0,-1){0.1}}
\put(3,1){\circle*{0.1}}
\put(3.1,0.9){$\alpha(t)$}

\put(2.2,1.7){\footnotesize $\alpha(0)=C_{0}^{-}(0)$}

\end{picture}
\caption{The proof of Theorem \ref{Th:disconnected}.}\label{The proof of Theorem 1.1.}
    \end{center}
  \end{figure}



We next prove (2). Let $\lambda:=\inf_M\varphi$. Clearly $M^\lambda_\lambda(\varphi)$ is totally convex, and hence Proposition 2.3 implies that $M^\lambda_\lambda(\varphi)$ carries the structure of a smooth totally geodesic submanifold. 

Suppose that $\dim M^\lambda_\lambda(\varphi)<n-1$, then the normal bundle is connected, and at each point $p\in M^\lambda_\lambda(\varphi)$ the indicatrix $\Sigma_p\subset T_pM$ has the property that $\Sigma_p\setminus\Sigma_p(M^\lambda_\lambda\varphi))$ is arcwise connected. Here, $\Sigma_p(M^\lambda_\lambda(\varphi))\subset\Sigma_p$ is the indicatrix at $p$ of $M^\lambda_\lambda(\varphi)$. Choose points $q_0$ and $q_1$ on distinct components of $M^c_c(\varphi)$, and an interior point $p\in M^\lambda_\lambda(\varphi)$. If $\g_i:[0,d(p,q_i)]\to M$ for $i=0,1$ is a minimizing geodesic with $\g_i(0)=p$, $\g_i(d(p,q_i))=q_i$, then $\dot\g_0(0)$ and $\dot\g_1(0)$ are joined by a curve $\Gamma:[0,1]\to\Sigma_p\setminus\Sigma_p(M^\lambda_\lambda(\varphi))$ such that $\Gamma(0)=\dot\g_0(0)$, $\Gamma(1)=\dot\g_1(0)$. The same method as developed in the proof of (1) yields a continuous $1$-parameter family of geodesics $\g_t:[0,\ell_t]\to M$ with $\g_t(0)=p$, $\dot\g_t(0)=\Gamma(t)$ and $\g_t(\ell_t)\in M^c_c(\varphi)$ for all $t\in[0,1]$. Thus we have a curve $t\mapsto \gamma_t(\ell_t)$
in $M^c_c(\varphi)$ joining $q_0$ to $q_1$, a contradiction. This proves $\dim M^\lambda_\lambda(\varphi)=n-1$. \par
We use the same idea for the proof of $M^\lambda_\lambda(\varphi)$ having no boundary. In fact, the tangent cone of $M^\lambda_\lambda(\varphi)$ at a boundary point $x$ (supposing that the boundary is non-empty) is contained entirely in a closed half space of $T_xM^\lambda_\lambda(\varphi)$, and hence $\Sigma_x\setminus\Sigma_x(M^\lambda_\lambda(\varphi))$ is arcwise connected. A contradiction is derived by constructing a curve in $M^c_c(\varphi)$ joining $q_0$ to $q_1$. This proves (2).\par
The triviality of the normal bundle over $M^\lambda_\lambda(\varphi)$ in $M$ is now clear.\par
We finally prove (4).  Suppose that $M^a_a(\varphi)$ for some $a\in\varphi(M)$ has at least three components. Let $q_1, q_2, q_3\in M^a_a(\varphi)$ be taken from distinct components, and $p\in M^\lambda_\lambda(\varphi)$. Let $\g_i:[0,d(p,q_i)]\to M$ for $i=1,2,3$ be minimizing geodesics with $\g_i(0)=p$, $\g_i(d(p,q_i))=q_i$. As is shown in (3), 
since the normal bundle over $M^\lambda_\lambda(\varphi)$ in $M$ is trivial, it follows that  
 $\Sigma_p\setminus\Sigma_p(M^\lambda_\lambda(\varphi))$ has exactly tow components. Two of the three initial vectors, say $\dot\g_1(0)$ and $\dot\g_2(0)$ belong to the same component of  $\Sigma_p\setminus\Sigma_p(M^\lambda_\lambda(\varphi))$. Then the same technique as developed in the proof of $\dim M^\lambda_\lambda(\varphi)=n-1$ applies, and $q_1$ is joined to $q_2$ by a curve in $M^a_a(\varphi)$, that is a contradiction. This proves (4).
$\qedd$
\endproof



       \section{Ends of $(M,F)$}

An {\it end} $\e$ of a noncompact manifold $X$ is an assignment to each compact set $K\subset X$ a component $\e(K)$ of $X\setminus K$ such that $\e(K_1)\supset\e(K_2)$ if $K_1\subset K_2$. Every non-compact manifold has at least one end. For instance, $\R^{n}$ has one end if $n>1$ and two ends if $n=1$. 

In the present section we discuss the number of ends of $(M,F)$ admitting a convex function, namely we will prove Theorem~\ref{Th:ends}.  As is seen in the previous section, it may happen that a convex function $\varphi:(M,F)\to\R$ has both compact and non-compact levels simultaneously. In this section let $\{K_j\}_{j=1,2,\cdots}$ be an increasing sequence of compact sets such that $\lim_{j\to \infty}K_j=M$.\par\medskip

\noindent
We prove Theorem~\ref{Th:ends}-(A1).\par
Theorem \ref{Th:disconnected} (1) then implies that $\varphi$ attains its infimum $\lambda:=\inf_M\varphi$. 
For an arbitrary given compact set $A\subset M$, there exists a number $a\in\varphi(M)$ such that $M^a_a(\varphi)$ has two components and $A\subset \varphi^{-1}[\lambda, a]$. Then $M\setminus A$ contains two unbounded open sets $\varphi^{-1}(a,\infty)$, proving (A-1).\par
\medskip\noindent


We prove Theorem~\ref{Th:ends}-(A2). 
Suppose that $M$ has more than one end. There is a compact set $K\subset M$ such that $M\setminus K$ has at least two unbounded components, say, $U$ and $V$. Setting $a:=\min_K\varphi$ and $b:=\max_K\varphi$, we have 
   \[     \lambda\le a<b<\infty.     \]
We assert that 
     \[    M^\lambda_\lambda(\varphi)\cap U\neq\emptyset,\quad M^\lambda_\lambda(\varphi)\cap V\neq\emptyset,\quad M^\lambda_\lambda(\varphi)\cap K\neq\emptyset.
      \]
In order to prove that $M^\lambda_\lambda(\varphi)\cap K\neq\emptyset$, we
suppose that $\lambda<a$. 

Suppose the contrary, namely $M^\lambda_\lambda(\varphi)\cap K=\emptyset$. Once 
$M^\lambda_\lambda(\varphi)\cap K\neq\emptyset$ has been established, 
 it will turn out that $M^\lambda_\lambda(\varphi)$ intersects all the unbounded components of $M\setminus K$. 

Without loss of generality we may suppose that $M^\lambda_\lambda(\varphi)\subset U$. From Theorem \ref{Th:disconnected} (3) it follows that 
 $M\setminus M^\lambda_\lambda(\varphi)=M_-\cup M_+$ (disjoint union with 
$\partial M_+=\partial M_-=M^\lambda_\lambda(\varphi)$). 

Setting $M_-\subset U$, we observe that $K\cup V\subset M_+$.

If $b_1>b$, then $M_-$ contains a component of $M_{b_1}^{b_1}(\varphi)$ and another component of $M_{b_1}^{b_1}(\varphi)$ is contained entirely in $V$. We then observe that if $\sup_{U\setminus M_-}\varphi=\infty$, then $U\setminus M_-$ contains a component of $M_{b_1}^{b_1}(\varphi)$, for $\varphi$ takes value $\leq b$ on $\partial(U\setminus M_-)$ and $M_{b_1}^{b_1}(\varphi)$ 
does not meet the boundary of $U\setminus M_- $. 
This contradicts to the Theorem \ref{Th:disconnected} (4), for  $\partial M_{b_1}^{b_1}(\varphi)$ has at least three components. 
Therefore we have $\sup_{U\setminus M_-}\varphi<\infty$.


\bigskip


\setlength{\unitlength}{1cm} 
  \begin{figure}[h]
    \begin{center}
\begin{picture}(6, 4)

\qbezier(-1,0)(3,1)(7,0)
\qbezier(-1,4.5)(3,3)(7,4.5)

\qbezier(1.5,0.4)(0.5,2.125)(1.5,3.85)
 \thicklines
\qbezier[40](1.5,0.4)(2.5,2.125)(1.5,3.85)
\thinlines
\put(2.9,2.5){$K$}
\put(2.9,1.5){$\gamma_j$}
\put(2.9,1.8){\vector(-1,0){0.1}}

\qbezier(4.3,0.4)(3.3,2.125)(4.3,3.85)
\thicklines
\qbezier[40](4.3,0.4)(5.3,2.125)(4.3,3.85)
\thinlines

\put(7.8,3.5){$V$}
\put(-0.7,3){$U\setminus M_+ $}
\put(-3.5,2.5){$M_{-}$}

\qbezier(-2.5,2.5)(-1,1.5)(-2.5,0.5)
\qbezier(-1.5,3.5)(0,1.5)(-1.5,0.5)
\qbezier(7.5,2.5)(6,1.5)(7.5,0.5)

\put(-1,0.5){$M_\lambda^\lambda(\varphi)$}
\put(-2.5,2.5){$M_{b_1}^{b_1}(\varphi)$}
\put(6,2.5){$M_{b_1}^{b_1}(\varphi)$}
\put(7,1.5){$p$}
\put(-2.3,1.5){$q_j$}

\qbezier(-1.75,1.5)(2.9,2.125)(6.75,1.5)
\end{picture}
\caption{The proof of Theorem~\ref{Th:ends}-(A2).}\label{The proof of Theorem 1.2-(A2)}
    \end{center}
  \end{figure}




Let $\{q_j\}\subset M_\lambda^\lambda(\varphi)$ be a divergent sequence of points and let us fix a point $p\in M_{b_1}^{b_1}(\varphi)\subset V$. Let $\gamma_j:[0,d(p,q_j)]\to M\setminus M_-$ be a minimizing geodesic with 
$\gamma_j(0)=p$, $\gamma_j(d(p,q_j))=q_j$, $j=1,2,\dots$. Clearly $\gamma_j$ passes through a point on $K$ and $\varphi\circ\gamma_j$ is bounded above by $b_1$. If $\gamma:[0,\infty)\to M\setminus M_-$ is a ray with $\dot{\gamma}(0)=\lim_{j\to \infty}\dot{\gamma}_j(0)$, then $\varphi\circ\gamma$ is constant on $[0,\infty)$ and $\varphi\circ\gamma(t)=b_1$ for all $t>b_1$. This is a contradiction to the choice of $b=\max_K\varphi$, for $\gamma$ passes through a point on $K$ at which $\varphi$ takes the value $b_1$. 
This proves the assertion (see Figure \ref{The proof of Theorem 1.2-(A2)}).\par

We next assert that if $b_1>b$ is fixed, then $M^{b_1}_{b_1}(\varphi)$ has at least four components. In fact, we observe from $M^{b_1}_{b_1}(\varphi)\cap K=\emptyset$, each unbounded component of $(M\setminus M^\lambda_\lambda(\varphi))\cap(M\setminus K)$ contains a component of $M^{b_1}_{b_1}(\varphi)$. This contradicts Theorem \ref{Th:disconnected}  (4), and (A-2) is  proved.
\par\medskip\noindent

The proof of (A3) is a consequence of (D2), and given after the proof of (D2).

We prove Theorem ~\ref{Th:ends}-(B1). 
From assumption of (B1), it follows that  $\varphi^{-1}[\inf_M\varphi,b_j]$ is compact for all $j$, where $\{b_j\}$ is a monotone divergent sequence. Then $K_j:=\varphi^{-1}[\lambda,b_j]$ is monotone increasing and $\lim_{j\to\infty}K_j=M$. Clearly $M\setminus K_j$, for every $j=1,2,\dots$,  
contains a unique unbounded domain $\varphi^{-1}(b_j,\infty)$. This proves Theorem ~\ref{Th:ends}-(B1).
\par\medskip\noindent
We prove Theorem ~\ref{Th:ends}-(B2).
From assumption of (B2), we have monotone sequences $\{a_j\}$ and $\{b_j\}$ such that
      \[    
\lim_{j\to\infty}a_j=\inf_M\varphi,\quad\lim_{j\to\infty}b_j=\infty,\quad [a_j,b_j]=\varphi(K_j),\quad j=1,2,\cdots.    
      \]
Then $M\setminus K_j$ for all large number $j$ contains two unbounded domains
   \[      M\setminus K_j\supset\varphi^{-1}(b_j,\infty)\cup\varphi^{-1}(\inf_M\varphi,a_j).    \]
This proves that $M$ has exactly two ends.
\par\medskip
We prove Theorem ~\ref{Th:ends}-(C).
We first prove (C) under an additional assumption that $\lambda:=\inf_M\varphi$ is attained. Suppose that $M$ has more than one end. Using the same notation as in the proof of (A2), 
    \[       \lambda:=\inf_M\varphi,\quad a:=\min_K\varphi,\quad b:=\max_K\varphi,    \]
where $K\subset M$ is a compact set such that $M\setminus K$ has at least two unbounded components $U$ and $V$. \par
We first assert that $K\cap M^\lambda_\lambda(\varphi)\neq\emptyset$. In fact, supposing that $K\cap M^\lambda_\lambda(\varphi)=\emptyset$ we find a component $V$ of $M\setminus K$ such that if $b'>b$ then $M_{b'}^{b'}(\varphi)\subset V$ and $M_\lambda^\lambda\subset U$. Here the assumption that all the levels of $\varphi$ are connected is essential. As is seen in the proof of (A2), there exist at least two components of $M^{b'}_{b'}$ for $b'>b$ such that one component of it lies in $U$ and another in $V$. This contradicts to the assumption in (C), and the first assertion is done.

The same proof technique as developed in (A2) implies that
 $M^\lambda_\lambda(\varphi)$ passes through points on $K$, $U$ and $V$. Fix a point $p\in V\cap M^\lambda_\lambda(\varphi)$ and a divergent sequence $\{q_j\}$ of points in $U\setminus M^\lambda_\lambda(\varphi)$ and $\g_j:[0,d(p,q_j)]\to M$ a minimizing geodesic with $\g_j(0)=p$, $\g_j(d(p,q_j))=q_j$. From construction of $\g_j$, we observe that $\varphi\circ\g_j$ is strictly increasing, and hence we find a number $t_j>0$ such that  $\g_j(t_j)\in M^{b'}_{b'}(\varphi)\cap U$.  
 
 From the construction of $\gamma_j$, we observe that $\varphi\circ\gamma$ is strictly increasing, and hence we find a number $t_j>0$ such that $\gamma_j(t_j)\in M_{b'}^{b'}(\varphi)\cap U$. 
 
More precisely, $\gamma_j[0,d(p,q_j)]$ meets $M_\lambda^\lambda(\varphi)$ only at the origin, for $M_\lambda^\lambda(\varphi)$ is totally convex and hence if $\gamma(t_0)\in M_\lambda^\lambda(\varphi)$, for some $t_0\in [0,d(p,q_j))$, then  $\gamma_j[0,d(p,q_j)]$ is contained entirely in $M_\lambda^\lambda(\varphi)$. 
 
 Therefore $M^{b'}_{b'}(\varphi)$ has more than one components (one component in $U$ and another component in $V$), a  contradiction to the assumption in (C). This concludes the proof of (C) in this case.\par\smallskip
We next prove (C) in the case where $\inf_M\varphi$ is not attained. Assume again that $M$ has more than one end. We then have
    \[     \inf_M\varphi<a<b<\infty,\quad a:=\min_K\varphi,\quad b:=\max_K\varphi.      \]
Since all the levels are connected, we find $\inf_M\varphi<a'<a$ and $b<b'$ such that $M^{a'}_{a'}(\varphi)\subset U$ and $M^{b'}_{b'}(\varphi)\subset V$. Let $\{y_j\}\subset M^{b'}_{b'}(\varphi)$ be a divergent sequence of points and fix a point $x\in M^{a'}_{a'}(\varphi)$. Let $\g_j:[0,d(x,y_j)]\to M$ for $j=1,2,\cdots$ be a minimizing geodesic with $\g_j(0)=x$ and $\g_j(d(x,y_j))=y_j$. There exists a ray
$\g:[0,\infty)\to M$ emanating from $x$ such that $\dot\g(0)=\lim_{j\to\infty}\dot\g_j(0)$. Clearly, every $\g_j$ passes through a point on $K$ and hence, so does $\g$. From construction, $\varphi\circ\g:[0,\infty)\to\R$ is bounded from above by $b'$, and hence it is constant. However it is impossible, for $\varphi(x)=a'$ and $\varphi\circ\g(t_0)\ge a>a'$ at a point $\g(t_0)\in K$. This proves (C) in this case.\par
\medskip\noindent

We prove Theorem ~\ref{Th:ends}-(D).

For the proof of (D1), we suppose that $M$ has more than two ends.



\setlength{\unitlength}{1cm} 
  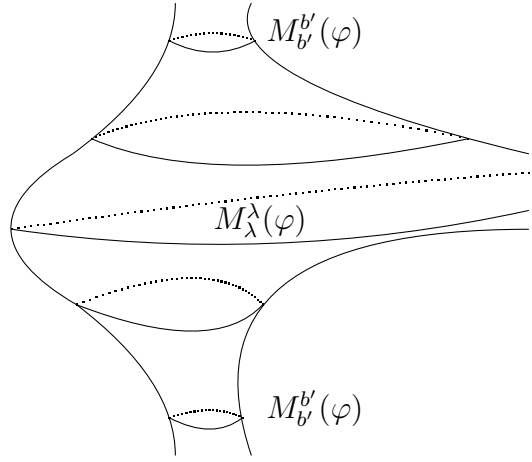
\begin{figure}[h]
    \begin{center}
\begin{picture}(6, 5)

\qbezier(0.65,4)(2,5)(2,6)
\qbezier(0.65,4)(-1,3)(0.7,2)
\qbezier(0.7,2)(2,1)(2,0)

\qbezier(6.65,4)(2.5,5)(3,6)
\qbezier(3,0)(2,3)(6.65,3)
\qbezier(1.92,5.5)(2.5,5.2)(3.05,5.5)
\qbezier[20](1.92,5.5)(2.5,5.7)(3.05,5.5)

\qbezier(0.9,4.2)(2.5,3.5)(5.85,4.2)
\qbezier[60](0.9,4.2)(2.5,4.9)(5.85,4.2)

\qbezier(1.92,0.5)(2.5,0.2)(2.88,0.5)
\qbezier[20](1.92,0.5)(2.5,0.7)(2.88,0.5)

\qbezier(0.7,2)(2.5,1.3)(3.15,2)
\qbezier[40](0.7,2)(2.5,2.7)(3.15,2)

\qbezier(-0.15,3)(3.25,2.5)(6.65,3.25)
\qbezier[80](-0.15,3)(3.25,3.5)(6.65,3.75)

\put(3.2,5.5){$M_{b'}^{b'}(\varphi)$}
\put(3.2,0.5){$M_{b'}^{b'}(\varphi)$}
\put(2.5,3){$M_{\lambda}^{\lambda}(\varphi)$}
\end{picture}
\caption{The proof of Theorem ~\ref{Th:ends}-(D1).}\label{The proof of Theorem 1.2-(D1)}
    \end{center}
  \end{figure}

Let $K\subset M$ be a connected compact subset such that $M\setminus K$ contains at least three unbounded components, say $U$, $V$ and $W$. We may consider that $U$ contains 
$\varphi^{-1}[b',\infty)$, for all $b'>b$. Since all the levels of $\varphi$ are connected, we have
\begin{equation*}
\sup_{M\setminus U}\varphi\leq b.
\end{equation*}

In fact, suppose that there exists a point $x\in M\setminus U$ such that $\varphi(x)=b'$, for some $b'>b$. Then $M_{b'}^{b'}(\varphi)\cap K=\emptyset$ and hence $M_{b'}^{b'}(\varphi)$ 
 is disconnected, a contradiction to the assumption of (D). 
 
 Let $\{x_j\}\subset V$ and $\{y_j\}\subset W$ be two divergent sequences of points and $\gamma_j:[0,d(x_j,y_j)]\to M\setminus U$ a minimizing geodesic joining $x_j$ to $y_j$. Since $\gamma_j$ passes through a point on $K$, there exists a straight line $\gamma:\R\to M\setminus U$ such that $\dot \gamma(0)$ is obtained as the limit of a converging sequence of vectors $\dot{\gamma}_j(t_j)\in K$, for $j=1,2,\dots$. Clearly, $\varphi\circ\gamma:\R\to \R$ is bounded above, and hence constant taking a value 
 $\mu=\varphi\circ\gamma(0)\in [a,b]$. We therefore observe that 
 \begin{equation*}
 M_\mu^\mu(\varphi)\cap K\neq \emptyset,\quad M_\mu^\mu(\varphi)\cap W\neq \emptyset\quad {\rm and } \quad 
 M_\mu^\mu(\varphi)\cap V\neq \emptyset.
 \end{equation*}
 
 We next choose a value $a'\in (\inf_M\varphi,a)$. We may assume without loss of generality that $M_{a'}^{a'}(\varphi)\subset V$. Let $\{z_j\}\subset M_\mu^\mu(\varphi)\cap W$ be a divergent sequence of points and $x\in M_{a'}^{a'}(\varphi)$ an arbitrary fixed point. Let $\sigma_j:[0,d(x,z_j)]\to M\setminus U$ be a minimizing geodesic with $\sigma_j(0)=x$, $\sigma(d(x,z_j))=z_j$, for all $j=1,2,\dots$. Clearly, $\varphi\circ\sigma_j$ is monotone increasing in $W$. Let $\sigma:[0,\infty)\to M$ be a ray such that $\dot\sigma(0)=\lim_{j\to \infty}\dot\sigma_j(0)$. We then observe that $\varphi\circ\sigma$
is monotone increasing on an unbounded interval $[\bar{b},\infty)$ for some $\bar{b}>0$, and
bounded above by $\mu$, and hence it is constant equal to $a'$. Recall that $\varphi\circ \sigma(0)=\varphi(x)=a'$. However this is impossible since $ a'<\min_K\varphi=a$ and $\sigma[0,\infty)$ passes through a point on $K$. We therefore observe that $M\setminus (K\cup U)$ has exactly one end. This proves (D1). 

The proof of (D2) is now clear and omitted here.




\setlength{\unitlength}{0.9cm} 
  \begin{figure}[h]
    \begin{center}
\begin{picture}(6, 6)

\qbezier(0,4)(2,5)(2,6)
\qbezier(0,2)(2,1)(2,0)

\qbezier(6.65,3.6)(2.5,5)(3,6)
\qbezier(3,0)(2,3)(6.65,2.6)
\qbezier(1.92,5.5)(2.5,5.2)(3.05,5.5)
\qbezier[20](1.92,5.5)(2.5,5.7)(3.05,5.5)


\qbezier(1.92,0.5)(2.5,0.2)(2.88,0.5)
\qbezier[20](1.92,0.5)(2.5,0.7)(2.88,0.5)


\qbezier(-0.15,3.2)(3.25,2.7)(6.65,3.25)

\put(3.2,5.5){$M_{b'}^{b'}(\varphi)$}
\put(3.2,0.5){$M_{b'}^{b'}(\varphi)$}
\put(2.5,3.2){$M_{\lambda}^{\lambda}(\varphi)$}
\end{picture}
\caption{The proof of Theorem ~\ref{Th:ends}-(A3).}\label{The proof of Theorem 1.2-(A3)}
    \end{center}
  \end{figure}
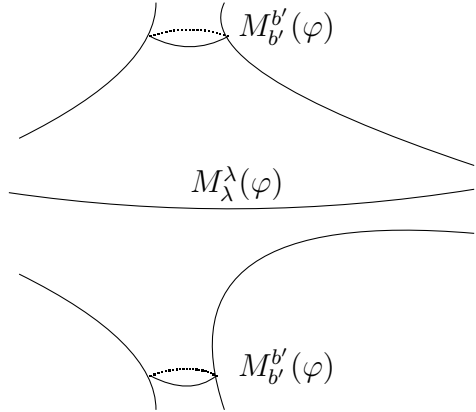

The proof of (A3) is now a straightforward consequence of (D2), see Figure \ref{The proof of Theorem 1.2-(A3)}. 

If $M_b^b(\varphi)$ is compact for some $b\in \varphi(M)$, then $\varphi^{-1}[b,\infty)$ has two ends. From the assumption and Theorem \ref{Th:disconnected} (1), we observe that $M_\lambda^\lambda(\varphi)$ is non-compact. Therefore $M^b(\varphi)=\varphi^{-1}[\lambda,b]$ is non-compact and hence contains at least one end. This proves (A3).


We prove  Theorem ~\ref{Th:ends} (E).
Suppose that $\varphi$ admits both compact and non-compact levels simultaneously. The same notation as in the proof of (D) will be used. If $\varphi$ admits a disconnected level, then $\varphi^{-1}[b',\infty)$, 
for all $b'>b$, consists of two unbounded components.

Then Theorem \ref{Th:disconnected} (1) and Lemma \ref{Lem:compactlevel} imply that $\lambda:=\inf_M\varphi$ is attained and $M^\lambda_\lambda(\varphi)$ is connected and noncompact. Therefore, every compact set $K$ containing $M^{b'}_{b'}(\varphi)$ has the property that $M\setminus K$ has more than two unbounded components. In fact, two components of $M\setminus K$ contain $\varphi^{-1}[b',\infty)$ and the other component intersects with $M^\lambda_\lambda(\varphi)$ outside $K$. This proves that $M$ has at least three ends, a contradiction to the assumption of (E).\par
If all the levels of $\varphi$ are connected and non-compact, then $M$ has one end by (C), a contradiction to the assumption of (E). This completes the proof of (E).
\medskip

\end{document}